\documentclass{article}
\usepackage[latin1]{inputenc}
\usepackage{longtable,helvet,courier,fancyhdr}
\usepackage{amsfonts}
\usepackage{amsmath,amssymb,bbm}
\usepackage{multicol}
\usepackage{url}
\usepackage{algorithm}
\usepackage{algorithmic}

\def\av{\mathbf{a}}
\def\B{\mathcal{B}}

\def\H{\mathcal{H}}
\def\I{\mathcal{I}}
\def\J{\mathcal{J}}
\def\kk{\mathbbm{k}}

\def\mf{\mathfrak{m}}
\def\NN{\mathbbm{N}}
\def\P{\mathcal{P}}
\def\Q{\mathcal{Q}}
\def\R{\mathcal{R}}
\def\S{\mathcal{S}}

\def\V{\mathcal{V}}

\def\xv{\mathbf{x}}

\DeclareMathOperator{\ch}{char}

\DeclareMathOperator{\gin}{gin}
\DeclareMathOperator{\lt}{lt}

\DeclareMathOperator{\reg}{reg}

\newtheorem{theorem}{Theorem}[section]
\newtheorem{corollary}[theorem]{Corollary}
\newtheorem{lemma}[theorem]{Lemma}
\newtheorem{proposition}[theorem]{Proposition}
\newtheorem{definition}[theorem]{Definition}
\newtheorem{remark}[theorem]{\bf{Remark}}
\newtheorem{example}[theorem]{\bf{Example}}
\newenvironment{proof}{\rm
  \noindent\textbf{Proof}\ }{\hspace*{\fill}
  \begin{math}\Box\end{math}\medskip}

\begin{document}
\title{Deterministically Computing Reduction Numbers of Polynomial Ideals}
\author{Amir Hashemi$^{a,b}$ and Michael Schweinfurter$^{c}$ and Werner M. Seiler$^{c}$\\
 $^{\rm a}$Isfahan University of Technology, Isfahan, 84156-83111, Iran\\
 $^{\rm b}$School of Mathematics, Institute for Research in Fundamental Sciences (IPM),\\ Tehran, 19395-5746, Iran\\
$^{\rm c}$Universit\"at Kassel, Heinrich-Plett-Stra\ss e 40, 34132 Kassel, Germany
}
\maketitle

\begin{abstract}
  We discuss the problem of determining reduction numbers of a polynomial
  ideal $\I$ in $n$ variables.  We present two algorithms based on parametric
  computations.  The first one determines the absolute reduction number of
  $\I$ and requires computations in a polynomial ring with
  $(n-\dim{\I})\dim{\I}$ parameters and $n-\dim{\I}$ variables.  The second
  one computes via a Gr\"obner system the set of all reduction numbers of the
  ideal $\I$ and thus in particular also its big reduction number.  However,
  it requires computations in a ring with $n\dim{\I}$ parameters and $n$
  variables.
\end{abstract}

\section{Introduction}

One of the fundamental ideas behind Gr\"obner bases is the reduction of
questions about general polynomial ideals to monomial ideals.  In the context
of determining invariants of an ideal like projective dimension or
Castelnuovo-Mumford regularity, it is therefore interesting to know when these
invariants possess the same values for an ideal and its leading ideal.  It is
well-known that in many instances the invariants of the leading ideal provide
an upper bound for those of the polynomial ideal and that in generic position,
i.\,e.\ when the leading ideal is the generic initial ideal, the values even
coincide.

From an algorithmic point of view, it is not easy to work with the generic
initial ideal.  While it is comparatively easy to determine it with
probabilistic method, there exists no simple test to verify that one has
really obtained the generic initial ideal.  However, relaxing the conditions
on the leading ideal somewhat one can introduce generic positions which share
many properties with the generic initial ideal and which are effectively
checkable with deterministic algorithms.  In \cite{wms:quasigen}, the authors
showed that for many purposes it suffices to ensure that the leading ideal is
quasi-stable (i.\,e.\  that the given ideal possesses a Pommaret basis
\cite{wms:comb1,wms:comb2}) in order to achieve that many invariants can be
immediately read off the Pommaret basis.

Our article \cite{wms:quasigen} was mainly concerned with invariants and
concepts related to the minimal free resolution of the given ideal.  In this
work, we study the \emph{reduction number}, an invariant which was introduced
by Northcott and Rees \cite{nr:red} and which intuitively measures the
complexity of computations in the associated factor ring.  It is also related
to some other invariants like the degree, the arithmetic degree and the
Castelnuovo-Mumford regularity (see \cite{bh:rednum,nvt:reduct,wvv:rednumalg}
for more details).  Independently, Conca \cite{ac:red} and Trung
\cite{nvt:conrednum} proved that the reduction number of an ideal is bounded
by the one of its leading ideal (for an arbitrary term order) and Trung
\cite{nvt:reduct} showed that for the generic initial ideal (for the degree
reverse lexicographic order) equality holds.

Trung \cite{nvt:conrednum} also presented an approach to the effective
determination of various reduction numbers.  However, his method is very
expensive.  We will show that it is indeed impossible to design a ``simple''
algorithm for reduction numbers where we mean by ``simple'' an approach based
solely on the analysis of leading terms.  Nevertheless, we will provide two
alternative methods which we believe to be more efficient than the one
presented by Trung.  Our first method is based on directly adding the right
number of sufficiently generic linear forms and yields the absolute reduction
number.  Our second method determines the whole set of possible reduction
numbers (and thus in particular both the absolute and the big reduction
number) using a Gr\"obner system.

Throughout this article, we will use the following notations.
$\P=\kk[x_{1},\dots,x_{n}]$ is an $n$-dimensional polynomial ring over some
infinite field $\kk$ with homogeneous maximal ideal $\mf$.  If not stated
otherwise, the term order will always be the degree reverse lexicographic
order induced by $x_{n}\prec\cdots\prec x_{1}$.  We assume that we are given a
fixed homogeneous ideal $\I\unlhd\P$ of dimension $D$ and write for the
corresponding factor ring $\R=\P/\I$.  A non-singular matrix
$A=(a_{ij})\in\mathrm{GL}(n,\kk)$ induces on $\P$ the linear change of
coordinates $x\mapsto A\cdot x$ transforming the given ideal $\I$ into a new
ideal $A\cdot\I\unlhd\P$.  Finally, given a term $t\in\P$, we denote by $w(t)$
the largest integer $\ell$ such that $x_{\ell}\mid t$.

The article is organised as follows.  The next section collects some known
facts about reduction numbers and generic initial ideals.  Section
\ref{sec:genstab} introduces some novel generalised notions of stability for
monomial ideals.  The following section extends the for us crucial notion of
weak $D$-stability to polynomial ideals and presents a deterministic algorithm
to transform any ideal into weakly $D$-stable position.  After these
preparations, we present in Section \ref{sec:absrednum} an algorithm for
computing the absolute reduction number.  In the final section, we exploit
Gr\"obner systems to compute the set of all possible reduction numbers.

\section{Reduction Numbers and the Generic Initial Ideal}
\label{sec:rednumgin}

We recall some basic facts about reduction numbers.  There exist several
equivalent approaches to defining them; for our purposes the following one is
particularly convenient.  Let $y_{1},\dots,y_{D}\in\P_{1}$ be $D$ linear forms
defining a Noether normalisation of $\R$.  Then the ideal $\J=\I+\langle
y_{1},\dots,y_{D}\rangle$ is called a \emph{minimal reduction} of $\I$ and the
\emph{reduction number} $r_{\J}(\R)$ with respect to $\J$ is the largest
non-vanishing degree in the factor ring $\P/\J$.  We write for the set of all
possible reduction numbers $\mathrm{rSet}(\R)=\{r_{\J}(\R)\mid\J\
\text{minimal reduction of}\ \I\}$.  The \emph{(absolute) reduction number}
$r(\R)$ is the minimal element of $\mathrm{rSet}(\R)$, the \emph{big reduction
  number} $br(\R)$ the maximal one.  As already mentioned above, the former
one appeared first in the work of Northcott and Rees \cite{nr:red}; the latter
one was much later introduced by Vasconcelos \cite{wvv:rednumid}.

While it is easy to construct some minimal reduction $\J$, the obvious key
problem in computing $r(\R)$ consists of identifying a $\J$ with
$r_{\J}(\R)=r(\R)$.  In the sequel, we will use the following three results.
The first one characterises all minimal reductions of a \emph{monomial} ideal
in Noether position.  Any such ideal has a minimal generator of the form
$x_{n-D}^{\alpha}$.  The second result relates for a strongly stable ideal
(which is always in Noether position) $r(\R)$ with the exponent $\alpha$.  The
final result bounds for arbitrary ideals $r(\R)$ by $r(\P/\lt{\I})$.

\begin{lemma}[{\cite[Lemma 5]{bh:rednum}}]\label{lem:minred}
  Let $\I\unlhd\P$ be a monomial ideal such that the variables
  $x_{n-D+1},\dots,x_{n}$ induce a minimal reduction.  Then every minimal
  reduction is induced by linear forms
  \begin{equation}\label{eq:mintrafo}
    y_{i}=x_{n-D+i}+\sum_{j=1}^{n-D}a_{ij}x_{j}\,,\quad a_{ij}\in\kk\,.
  \end{equation}
\end{lemma}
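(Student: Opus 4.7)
My plan is to exploit the monomial hypothesis together with the algebraic independence enforced by the Noether normalisation property in order to pin down the shape of every minimal reduction. First I would argue that the variables $x_1,\dots,x_{n-D}$ become nilpotent modulo $\I$. Since $\I+\langle x_{n-D+1},\dots,x_n\rangle$ is $\mf$-primary and both summands are monomial, the sum is itself a monomial ideal, so for each $j\le n-D$ a suitable pure power $x_j^{N_j}$ must appear. Because such a pure power involves none of the variables $x_{n-D+1},\dots,x_n$, it must in fact already lie in $\I$, whence $\langle x_1,\dots,x_{n-D}\rangle$ is nilpotent in $\R$.

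Next, let $y_1,\dots,y_D$ be any linear forms inducing a minimal reduction and set $V=\langle y_1,\dots,y_D\rangle_{\kk}\subseteq\P_1$. By definition these forms give a Noether normalisation of $\R$, so they are algebraically independent modulo $\I$. I would then observe that this forces $V\cap\langle x_1,\dots,x_{n-D}\rangle=0$: any nonzero $v$ in the intersection would by the first step satisfy $v^M\in\I$ for some $M$, and extending $v$ to a basis of $V$ would yield the non-trivial polynomial relation $t_1^M$ vanishing on $v,y_2,\dots,y_D$ in $\R$, contradicting algebraic independence.

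Finally, I would consider the projection $\pi\colon\P_1\to\P_1/\langle x_1,\dots,x_{n-D}\rangle$, whose target has dimension $D$ with basis the classes of $x_{n-D+1},\dots,x_n$. The previous step shows that $\pi|_V$ is injective, and since both source and target are $D$-dimensional it is an isomorphism. Hence $V$ admits a unique basis $y_1,\dots,y_D$ with $\pi(y_i)$ equal to the class of $x_{n-D+i}$, and any such basis is exactly of the stated form. The only step genuinely using the monomial hypothesis is the reduction to pure powers $x_j^{N_j}\in\I$; I expect this to be the main obstacle, while the remainder is a transparent dimension count that would apply in any graded setting once the nilpotency conclusion is in hand.
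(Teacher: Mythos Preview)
Your argument is correct. The paper does not supply its own proof of this lemma; it simply quotes it from \cite{bh:rednum}. Your approach---first extracting pure powers $x_j^{N_j}\in\I$ for $j\le n-D$ from the monomial hypothesis, then using algebraic independence of a Noether normalisation to show $V\cap\langle x_1,\dots,x_{n-D}\rangle_{\kk}=0$, and finally concluding via a dimension count on $\pi|_V$---is the standard route and is essentially how Bresinsky and Hoa argue as well. One cosmetic remark: in step~2 you do not need to extend $v$ to a new basis; it is enough to note that $v=\sum_i c_i y_i$ with some $c_i\neq0$, so that $v^M=0$ in $\R$ directly contradicts the algebraic independence of $y_1,\dots,y_D$. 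Your identification of the monomial hypothesis as the only place where the argument is not purely formal is also accurate.
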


\begin{theorem}[{\cite[Thm.~11]{bh:rednum}}]\label{thm:redstable}
  Let $\I\unlhd\P$ be a strongly stable monomial ideal.  Then $\I$ has a
  minimal generator $x_{n-D}^{\alpha}$ and we have $r(\R)=r_{\J}(\R)=\alpha-1$
  for any minimal reduction $\J$ of $\I$.
\end{theorem}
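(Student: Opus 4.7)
My plan splits the claim into three pieces: identifying the minimal generator $x_{n-D}^{\alpha}$, the upper bound $r_\J(\R)\le\alpha-1$, and the matching lower bound $r_\J(\R)\ge\alpha-1$ for every minimal reduction $\J$. The lower bound is the main obstacle, because once one translates $\P/\J$ to a polynomial ring in fewer variables, the image of $\I$ is no longer monomial, so the combinatorics is lost and a nonvanishing class has to be exhibited by other means.

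For the first piece, every associated prime of a strongly stable $\I$ is of the form $\langle x_1,\dots,x_k\rangle$: if $x_j m\in\I$, then by strong stability $x_i m\in\I$ for $i<j$, so every colon ideal $\I\colon m$ (and hence every associated prime) is generated by an initial segment of the variables. Combined with $\dim\I=D$ this forces $\langle x_1,\dots,x_{n-D}\rangle$ to be the unique minimal prime, so $x_{n-D}\in\sqrt{\I}$, and taking the smallest $\alpha$ with $x_{n-D}^{\alpha}\in\I$ yields a minimal generator. In particular $x_{n-D+1},\dots,x_n$ induce a minimal reduction, so Lemma \ref{lem:minred} applies and every minimal reduction has the form $y_i=x_{n-D+i}+\sum_{j=1}^{n-D}a_{ij}x_j$. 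I would then introduce the substitution $\phi\colon\P\to T:=\kk[x_1,\dots,x_{n-D}]$ with $x_{n-D+i}\mapsto-\sum_j a_{ij}x_j$; it is graded, surjective, and has kernel $\langle y_1,\dots,y_D\rangle$, so $\P/\J\cong T/\phi(\I)$.

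For the upper bound I would iterate strong stability on $x_{n-D}^{\alpha}$, each time swapping one factor $x_{n-D}$ for an $x_j$ with $j\le n-D$, to obtain every monomial of degree $\alpha$ in $x_1,\dots,x_{n-D}$. Since $\phi$ fixes these monomials, $(x_1,\dots,x_{n-D})^{\alpha}\subseteq\phi(\I)$, so $(T/\phi(\I))_d=0$ for all $d\ge\alpha$ and hence $r_\J(\R)\le\alpha-1$.

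For the hard lower bound, my idea is to compose $\phi$ with the evaluation $\pi\colon T\to\kk[x_{n-D}]$ killing $x_1,\dots,x_{n-D-1}$. For a minimal generator $m=\prod_k x_k^{b_k}$ of $\I$: if some $b_j$ with $j<n-D$ is positive, then $\pi(\phi(m))=0$; otherwise $m\in\kk[x_{n-D},\dots,x_n]$, and iteratively swapping each $x_{n-D+i}$ inside $m$ for $x_{n-D}$ via strong stability produces $x_{n-D}^{\deg m}\in\I$, forcing $\deg m\ge\alpha$, so $\pi(\phi(m))=c\cdot x_{n-D}^{\deg m}\in(x_{n-D}^{\alpha})$ for some scalar $c$. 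Because $\pi\circ\phi$ is a ring homomorphism, $\pi(\phi(\I))$ lies inside the ideal in $\kk[x_{n-D}]$ generated by these images, so $\pi(\phi(\I))\subseteq(x_{n-D}^{\alpha})$. Since $x_{n-D}^{\alpha-1}\notin(x_{n-D}^{\alpha})$, we conclude $x_{n-D}^{\alpha-1}\notin\phi(\I)$, which gives a nonzero class in $(\P/\J)_{\alpha-1}$ and hence $r_\J(\R)\ge\alpha-1$. The two bounds together yield $r_\J(\R)=\alpha-1$ for every minimal reduction, so in particular $r(\R)=\alpha-1$.
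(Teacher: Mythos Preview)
Your proof is correct. The paper itself does not prove Theorem~\ref{thm:redstable}; it is quoted from Bresinsky--Hoa. However, the paper proves the generalisation Theorem~\ref{thm:rnwds} (weakly $D$-stable instead of strongly stable) and says explicitly that its argument follows Bresinsky--Hoa, so that proof is the natural point of comparison.

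For the existence of $x_{n-D}^{\alpha}$ and the upper bound $r_{\J}(\R)\le\alpha-1$ your reasoning and the paper's coincide: stability forces every monomial of degree $\alpha$ in $x_{1},\dots,x_{n-D}$ into $\I$, hence into $\phi(\I)$. (The paper obtains Noether position via Proposition~\ref{prop:wdsnp} rather than via associated primes, but that is a matter of taste.)

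The lower bound is where the two routes diverge. The paper shows that $\phi(\I)=\I\cap\kk[x_{1},\dots,x_{n-D}]$ for \emph{every} choice of the coefficients $a_{ij}$: by Remark~\ref{rem:weaklstab}, every monomial in the support of $\phi(t)$ for $t\in\I$ already lies in $\I$, so $\phi(\I)\subseteq\I\cap\kk[x_{1},\dots,x_{n-D}]$, and the reverse inclusion is trivial. Once $\phi(\I)$ is identified with this fixed \emph{monomial} ideal, the lower bound is immediate from $x_{n-D}^{\alpha-1}\notin\I$. You instead project one step further, to $\kk[x_{n-D}]$, and show $\pi(\phi(\I))\subseteq(x_{n-D}^{\alpha})$; this excludes $x_{n-D}^{\alpha-1}$ from $\phi(\I)$ without ever identifying $\phi(\I)$ explicitly.

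Both arguments rest on the same stability moves and are of comparable length. The paper's version yields the stronger structural statement that all minimal reductions produce literally the same quotient ring, which is a cleaner explanation of why $r_{\J}(\R)$ is independent of $\J$. Your projection argument is tailored to the specific inequality and, incidentally, would also go through verbatim under the weaker hypothesis of weak $D$-stability used in Theorem~\ref{thm:rnwds}.
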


\begin{theorem}[{\cite[Thm.~1.1]{ac:red}, \cite[Cor.~3.4]{nvt:conrednum}}]
  \label{thm:redlti}
  For any ideal $\I\unlhd\P$ and any term order $\prec$, the inequality
  $r(\R)\leq r(\P/\lt{\I})$ holds.
\end{theorem}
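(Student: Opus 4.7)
The plan is to combine the invariance of the Hilbert function under passage to leading terms with an upper-semicontinuity argument on the space of minimal reductions. I would parametrise $D$-tuples $L$ of linear forms in $\P$ by points $a\in\kk^{nD}$ and work on the Zariski-dense open locus $U$ where $L(a)$ simultaneously induces minimal reductions of both $\I$ and $\lt{\I}$; such a locus is non-empty because $\kk$ is infinite and both Noether-normalisation conditions are Zariski open.

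The first key step will be to show that, for $\I$ and for $\lt{\I}$ separately, the function $a\mapsto r_{L(a)}$ is Zariski upper-semicontinuous on the corresponding locus of minimal reductions: each $\dim_\kk(\P/(\I+L(a)))_d$ (and similarly for $\lt{\I}$) is a fibrewise dimension of a coherent sheaf over $\kk^{nD}$, hence upper-semicontinuous in $a$, and only finitely many degrees $d$ matter since the Artinian lengths in the family are uniformly bounded. Consequently the infimum $r(\R)=\min_a r_{L(a)}(\R)$ is attained on a Zariski-dense open subset, and likewise for $r(\P/\lt{\I})$. Intersecting these two generic loci with $U$ produces a dense open $U'\subseteq U$ on which a single $L$ simultaneously realises $r_L(\R)=r(\R)$ and $r_L(\P/\lt{\I})=r^{*}:=r(\P/\lt{\I})$.

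For such a generic $L$, I would next compare the Hilbert series of $\P/(\I+L)$ and $\P/(\lt{\I}+L)$. Both quotients are Artinian of the same multiplicity (the common degree of $\R$ and $\P/\lt{\I}$), and because $H_{\P/\I}=H_{\P/\lt{\I}}$ their Hilbert functions must agree in all sufficiently high degrees. This will force the top non-vanishing degree on the $\I$-side to be at most that on the $\lt{\I}$-side, yielding $r_L(\R)\le r_L(\P/\lt{\I})$, and therefore $r(\R)\le r^{*}$.

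The hard part will be that last Hilbert-function comparison: in the non-Cohen--Macaulay setting the almost-regular sequence $L$ introduces finite-dimensional socle corrections that may differ between $\I$ and $\lt{\I}$, and one must verify these do not raise the top non-vanishing degree on the $\I$-side above that on the $\lt{\I}$-side. The cleanest route I foresee is a preliminary generic linear change of coordinates, which leaves $r(\R)$ unchanged and turns $\lt{\I}$ into the generic initial ideal $\gin_\prec(\I)$; this ideal is Borel-fixed (and in characteristic zero strongly stable), so Theorem~\ref{thm:redstable} applies, forcing every minimal reduction of $\gin_\prec(\I)$ to yield the same reduction number and reducing the needed comparison to a transparent check that can then be transferred back to the original $\lt{\I}$ via the behaviour of Hilbert functions under the change of coordinates.
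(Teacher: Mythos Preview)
The paper does not give a proof of Theorem~\ref{thm:redlti}; it is quoted from Conca and Trung as a black box, so there is no in-paper argument to compare against and your proposal has to stand on its own.

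Your architecture is reasonable up to the point where, for a fixed generic $L$, you must compare the Hilbert functions of $\P/(\I+L)$ and $\P/(\lt{\I}+L)$. The justification you offer---that $H_{\P/\I}=H_{\P/\lt{\I}}$, hence the two Artinian quotients ``agree in all sufficiently high degrees''---is vacuous: both functions are eventually zero, and equality of total length says nothing about where the top nonzero degree sits. What is actually needed is the coefficientwise inequality $\dim_\kk\bigl(\P/(\I+L)\bigr)_d\le\dim_\kk\bigl(\P/(\lt{\I}+L)\bigr)_d$ for every $d$, and this does \emph{not} follow from $H_{\P/\I}=H_{\P/\lt{\I}}$. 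It comes from upper-semicontinuity along the one-parameter Gr\"obner degeneration $\I\rightsquigarrow\lt{\I}$: there is a flat family over $\mathbb{A}^1$ with general fibre $\I$ and special fibre $\lt{\I}$, and after adding the \emph{same} $L$ to every fibre, semicontinuity of graded fibre dimensions yields the inequality at $t=0$. That degeneration is the core of Conca's proof and is absent from yours; your semicontinuity runs over the space of $L$'s, which is orthogonal to the direction you need.

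Your proposed workaround---perform a generic coordinate change so that the leading ideal becomes $\gin_\prec(\I)$ and then ``transfer back''---does not close the gap. A coordinate change $A$ replaces $\lt{\I}$ by $\lt(A\cdot\I)=\gin{\I}$, not by anything comparable to the original $\lt{\I}$; the monomial ideals $\lt{\I}$ and $\gin{\I}$ share a Hilbert function but can have different reduction numbers (in Green's Example~\ref{ex:green} one has $r(\P/\lt{\I})=2$ while $r(\P/\gin{\I})=1$). Thus establishing $r(\R)\le r(\P/\gin{\I})$ only reproduces Theorem~\ref{thm:rngin} and tells you nothing about the original $\lt{\I}$. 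The missing ingredient is genuinely the Gr\"obner-degeneration semicontinuity, not a coordinate change.
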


Galligo \cite{gall:weier} proved for a base field $\kk$ of characteristic $0$
that almost any linear coordinate transformation leads to the same leading
ideal, the \emph{generic initial ideal} $\gin{\I}$ (for more information see
\cite{mlg:gin}).  Bayer and Stillman \cite{bs:reverse} extended this result to
positive characteristic.  A for us important result of Trung asserts that for
the generic initial ideal the inequality in Theorem \ref{thm:redlti} becomes
an equality.

\begin{theorem}[Galligo, {\cite{gall:weier}, \cite{bs:reverse}}]
  \label{thm:gal}
  There exists a nonempty Zariski open subset $\mathcal{U}\subseteq
  \mathrm{GL}(n,\kk)$ such that $\lt{(A\cdot\I)}=\lt{(A'\cdot\I)}$ for all
  matrices $A,A'\in \mathcal{U}$.
\end{theorem}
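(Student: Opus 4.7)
The plan is to realise $\lt(A\cdot\I)$ for generic $A$ as the output of a single Buchberger computation carried out with indeterminate matrix entries, and then to argue that for every specialisation avoiding a suitable hypersurface the output is unchanged.

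First, fix generators $f_{1},\dots,f_{s}$ of $\I$, introduce $n^{2}$ indeterminates $a_{ij}$, form the matrix $A=(a_{ij})$, and work in $F[x_{1},\dots,x_{n}]$ with $F=\kk(a_{ij})$. Apply Buchberger's algorithm with respect to $\prec$ to $\{A\cdot f_{1},\dots,A\cdot f_{s}\}$. Termination is guaranteed by Noetherianity of $F[x_{1},\dots,x_{n}]$, so one obtains a Gr\"obner basis $G$ whose elements may, after clearing denominators, be chosen in $\kk[a_{ij}][x_{1},\dots,x_{n}]$. During this computation every branching decision --- which term of an intermediate polynomial is the $\prec$-leading one, and which S-polynomial reductions yield a non-vanishing remainder --- is controlled by the non-vanishing of a specific polynomial in the $a_{ij}$. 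Collect these finitely many polynomials (only finitely many arise, since the algorithm terminates), multiply them together with $\det A$, and call the resulting nonzero element $\Delta\in\kk[a_{ij}]$.

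Now define $\mathcal{U}=\{A_{0}\in\mathrm{GL}(n,\kk)\mid\Delta(A_{0})\neq 0\}$. Since $\kk$ is infinite and $\Delta\not\equiv 0$, $\mathcal{U}$ is a nonempty Zariski open subset of $\mathrm{GL}(n,\kk)$. For any $A_{0}\in\mathcal{U}$, the specialisation $a_{ij}\mapsto(A_{0})_{ij}$ preserves every branching decision of the Buchberger computation, because a nonzero coefficient can only drop to zero under specialisation when one of the factors of $\Delta$ vanishes. Hence the specialised computation produces a Gr\"obner basis of $A_{0}\cdot\I$ whose leading monomials coincide with those of $G$, which shows that $\lt(A_{0}\cdot\I)$ is a fixed monomial ideal for all $A_{0}\in\mathcal{U}$.

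The main obstacle is the careful bookkeeping of which polynomials in $\kk[a_{ij}]$ must enter $\Delta$: one must include not only the leading coefficients of the final Gr\"obner basis elements, but every leading coefficient encountered during intermediate reductions, so that specialisation cannot silently swap which monomial is leading inside an intermediate polynomial. Once termination of the generic computation is invoked, finiteness of this list is automatic, and the remainder of the argument is formal; the direction of specialisation (only nonzero $\to$ zero, never the reverse) is what makes the whole scheme work.
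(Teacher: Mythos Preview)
The paper does not give its own proof of this theorem; it is stated with attribution to Galligo and Bayer--Stillman and used as a background result.  Your argument therefore has to be measured against the classical proofs in those references (or the standard exposition in Green's lectures), not against anything written here.

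Your approach is correct and genuinely different from the classical one.  The traditional proofs work degree by degree: for each $d$ one regards $A\mapsto (A\cdot\I)_{d}$ as a morphism from $\mathrm{GL}(n,\kk)$ into a Grassmannian and shows, via Pl\"ucker coordinates or an exterior-algebra argument, that the set of $A$ for which the $\prec$-initial subspace of $(A\cdot\I)_{d}$ is lexicographically maximal is open and nonempty; one then argues that only finitely many degrees need to be controlled because the generic leading ideal is finitely generated.  Your argument instead runs a single Buchberger computation over the function field $\kk(a_{ij})$ and records the finitely many coefficient polynomials whose non-vanishing forces specialisation to commute with every step of the computation.  This is more algorithmic and arguably more elementary---no Grassmannians appear---and it has the side benefit of producing an explicit $\Delta$ cutting out the bad locus.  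What it does not give for free is the Borel-fixedness of the resulting monomial ideal, which the classical proofs obtain automatically; but that is not part of the statement you were asked to prove.

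Your final paragraph correctly isolates the one delicate point: you must track the leading coefficient of \emph{every} intermediate polynomial produced during reduction (not merely of the final basis elements), and you must fix a deterministic reduction strategy so that ``intermediate polynomial'' is unambiguous.  With that understood, the argument goes through.
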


\begin{theorem}[{\cite[Thm.~4.3]{nvt:reduct}}]\label{thm:rngin}
  For the degree reverse lexicographic order, we always find
  $r(\R)=r(\P/\gin{\I})$. 
\end{theorem}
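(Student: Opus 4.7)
The plan is to move to generic coordinates, exhibit an explicit minimal reduction whose reduction number equals $r(\P/\gin\I)$, and then use a semicontinuity argument to show that this is the absolute reduction number. By Theorem~\ref{thm:gal} I first choose $A\in\mathrm{GL}(n,\kk)$ with $\lt{A\cdot\I}=\gin\I$; since a linear change of coordinates induces a graded isomorphism of factor rings, $r(\R)=r(\P/A\cdot\I)$, and I may henceforth assume $\lt\I=\gin\I$. Theorem~\ref{thm:redlti} then already delivers the upper bound $r(\R)\leq r(\P/\gin\I)$, reducing the task to proving that this bound is attained.

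The candidate minimal reduction is $\J=\I+\langle x_{n-D+1},\dots,x_n\rangle$. Since $\gin\I$ is strongly stable, Theorem~\ref{thm:redstable} supplies a minimal generator $x_{n-D}^{\alpha}$ with $r(\P/\gin\I)=\alpha-1$, and Lemma~\ref{lem:minred} shows that $x_{n-D+1},\dots,x_n$ induce a minimal reduction of $\gin\I$; since $\R$ and $\P/\gin\I$ share the same Hilbert polynomial, these variables form a Noether normalisation of $\R$ as well, so $\J$ is indeed a minimal reduction of $\I$. The central technical step is the drevlex-specific identity
\[\lt\J=\gin\I+\langle x_{n-D+1},\dots,x_n\rangle,\]
which I would verify by showing that a reduced drevlex Gr\"obner basis of $\I$ together with $x_{n-D+1},\dots,x_n$ is already a Gr\"obner basis of $\J$: in drevlex the new generators are the smallest variables in the ordering, so no S-polynomial between one of them and an element of $\I$ yields a non-trivial reduction. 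Given the identity, the Hilbert functions of $\P/\J$ and $\P/(\gin\I+\langle x_{n-D+1},\dots,x_n\rangle)$ coincide, and comparing their largest non-vanishing degrees (via Theorem~\ref{thm:redstable} on the right-hand side) yields $r_\J(\R)=r(\P/\gin\I)$.

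To promote $r(\R)\leq r_\J(\R)=r(\P/\gin\I)$ to equality I would invoke upper semicontinuity of the Hilbert function. Parametrising the minimal reductions of $\I$ by $D$-tuples $(y_1,\dots,y_D)$ of linear forms, each dimension $\dim_\kk(\P/(\I+\langle y_i\rangle))_d$ is upper semicontinuous in the parameters, so $\{(y_i)\mid r_{\I+\langle y_i\rangle}(\R)\leq k\}$ is Zariski open for every $k$, and in particular the minimum value $r(\R)$ is attained on a non-empty open subset. As $A$ varies over the Galligo-generic open set of Theorem~\ref{thm:gal}, the computation of the previous paragraph provides another non-empty open locus of tuples on which the reduction number equals $r(\P/\gin\I)$; since the parameter space is irreducible, these two opens meet and the claimed equality follows.

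The main obstacle is the drevlex identity $\lt\J=\gin\I+\langle x_{n-D+1},\dots,x_n\rangle$: this is precisely the place where the specific properties of the degree reverse lexicographic order enter, and it is what permits the entire problem to be transferred to the strongly stable monomial ideal $\gin\I$, where Theorem~\ref{thm:redstable} finishes the computation. The semicontinuity argument in the third paragraph is comparatively routine once the explicit reduction is in hand.
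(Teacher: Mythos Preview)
The paper does not prove Theorem~\ref{thm:rngin}; it is quoted verbatim from Trung \cite[Thm.~4.3]{nvt:reduct} as background, so there is no ``paper's proof'' to compare against. What follows is therefore an assessment of your reconstruction on its own merits.

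In characteristic zero your outline is essentially correct and follows the expected route: pass to coordinates with $\lt\I=\gin\I$, use the Bayer--Stillman property of the degree reverse lexicographic order that $\lt(\I+\langle x_{n-D+1},\dots,x_n\rangle)=\lt\I+\langle x_{n-D+1},\dots,x_n\rangle$, and compare with the strongly stable ideal $\gin\I$ via Theorem~\ref{thm:redstable}. Two small points deserve tightening. First, your appeal to Lemma~\ref{lem:minred} is misplaced: that lemma \emph{assumes} that the last $D$ variables induce a minimal reduction; what you actually need here is that $\gin\I$ is in Noether position, which follows directly from strong stability (or Proposition~\ref{prop:wdsnp}). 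Second, in the semicontinuity step you assert that the Galligo open subset of $\mathrm{GL}(n,\kk)$ produces an \emph{open} locus of $D$-tuples of linear forms. This is true but not automatic---it uses that the quotient map $\mathrm{GL}(n,\kk)\to\mathrm{GL}(n,\kk)/P$ is open. A cleaner fix is to run the entire intersection argument in $\mathrm{GL}(n,\kk)$: the preimage there of your set $V_1$ is open and nonempty (every minimal reduction arises from some $A$), and it meets the Galligo open set because $\mathrm{GL}(n,\kk)$ is irreducible.

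There is one genuine gap. You invoke Theorem~\ref{thm:redstable}, which requires $\gin\I$ to be \emph{strongly} stable. This is guaranteed only when $\ch\kk=0$; in positive characteristic $\gin\I$ is merely Borel-fixed and need not be strongly stable, so neither Theorem~\ref{thm:redstable} nor the identification $r(\P/\gin\I)=\alpha-1$ is available in the form you use. Since the paper's standing hypothesis is only that $\kk$ is infinite, your argument as written does not cover the positive-characteristic case that Trung's original proof handles.
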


\section{Some Generalised Notions of Stability}
\label{sec:genstab}

Stable and strongly stable ideals form two important classes of monomial
ideals.  We introduce now generalisations of these concepts depending on an
integer $\ell$.  In the context of determining reduction numbers, it will turn
out that the case $\ell=D$ is of particular interest.  Like for the classical
stability notions, it is easy to see that it always suffices, if the defining
property is satisfied by the minimal generators of the ideal.

\begin{definition}\label{def:lstab}
  Let $0\leq\ell<n$ be an integer.  The monomial ideal $\I$ is
  \emph{$\ell$-stable}, if for every term $t\in\I$ with $w(t)\ge n-\ell$ and
  every $i<w(t)$ the term $x_it/x_{w(t)}$ also lies in $\I$.  For a
  \emph{weakly $\ell$-stable} ideal $\I$, the above condition must be
  satisfied only for all $i\leq n-\ell$.  Finally, $\I$ is \emph{strongly
    $\ell$-stable}, if for every term $t\in\I$ with $w(t)\ge n-\ell$, every
  $j\ge n-\ell$ with $x_{j}\mid t$ and every $i<j$ the term $x_it/x_{j}$ also
  lies in $\I$.
\end{definition}

\begin{example}\label{ex:katsura}
  We consider first for $n=6$ the ideal
  \begin{displaymath}
    \begin{array}{ll}
      \I={}&\langle x_{1},\ x_4^2,\ x_3x_4,\ x_{2}x_4,\ x_{2}x_3,\ x_2^2,\ x_5^3,\
      x_4x_5^2,\ x_3x_5^2,\ x_2x_5^2,\ x_3^2x_5,\ x_3^3,\ x_5^2x_6^2,\\ 
      & x_4x_5x_6^2,\ x_3x_5x_6^2,\ x_2x_5x_6^2,\ x_3^2x_6^2,\ x_5x_6^4,\ x_4x_6^4,\
      x_3x_6^4,\ x_2x_6^4,\ x_6^6 \rangle\,,
    \end{array}
  \end{displaymath}
  the leading ideal of the fifth Katsura ideal.  As one can easily see that
  here $D=0$, it suffices to check the defining property for the generators
  containing $x_{6}$ and it turns out that $\I$ is $0$-stable.  However, $\I$
  is not stable, as for example $x_{3}x_{4}\in\I$ but $x_{3}^{2}\notin\I$.

  Consider now for $n=5$ the monomial ideal
  \begin{displaymath}
   \begin{array}{ll}
      \I={}& \langle x_1^2,\ x_2^3,\ x_1x_2^2,\ x_3^2x_2^2,\ x_2x_3^2x_1,\ x_3^5,\
      x_2x_3^4,\ x_1x_3^4,\ x_3^4x_4^2,\ x_2x_3^3x_4^2,\ x_1x_3^3x_4^2,\\
      & x_3^3x_4^4,\ x_3^2x_2x_4^4,\ x_1x_3^2x_4^4,\ x_3x_2^2x_4^4,\ 
      x_3x_2x_1x_4^4,\ x_1x_2x_3x_4^3x_5^2,\ x_1x_3x_4^6,\ x_2^2x_4^6,\\
      & x_1x_2x_4^6,\ x_2^2x_4^5x_5^2,\ x_1x_2x_4^5x_5^2,\ x_3x_2^2x_4^3x_5^4,\
      x_1x_3x_4^5x_5^3,\ x_2x_3^2x_4^3x_5^5,\ x_1x_3^2x_4^3x_5^5,\\
      & x_1x_2x_4^4x_5^6,\ x_1x_4^6x_5^5,\ x_2^2x_4^4x_5^7,\
      x_2x_3x_4^5x_5^7\rangle\,. 
    \end{array}
  \end{displaymath}
  Since here $D=2$, we must check the defining property of a weakly $D$-stable
  ideal only for the terms containing $x_{3},x_{4},x_{5}$ and one readily
  verifies that $\I$ is weakly $D$-stable.  However, it is not $D$-stable
  because $t=x_{1}x_{4}^6x_{5}^5\in\I$ but $tx_{4}/x_{5}\notin\I$.
\end{example}

The generic initial ideal is always Borel-fixed, i.\,e.\ invariant under the
natural action of the Borel group \cite{bs:reverse,ag:divstab}.  In general,
it depends on the characteristic of the base field whether a given ideal is
Borel-fixed.  In characteristic zero, the Borel-fixed ideals are precisely the
strongly stable ones.  We provide now the analogous result for strong
$\ell$-stability.

\begin{definition}\label{def:lborel}
  The \emph{Borel group} is the subgroup $\B<\mathrm{GL}(n,\kk)$ consisting of
  all lower triangular invertible $n\times n$ matrices.  For any integer
  $0\leq\ell<n$, we define the \emph{$\ell$-Borel group} as the subgroup
  $\B_{\ell}\leq\B$ consisting of all matrices $A\in\B$ such that for
  $i<n-\ell$ we have $a_{ii}=1$ and $a_{ij}=0$ for $i\neq j$.
\end{definition}

\begin{proposition}\label{prop:lstabborel}
  Assume that $\ch{\kk}=0$.  The monomial ideal $\I\unlhd\P$ is strongly
  $\ell$-stable, if and only if it is invariant under the $\ell$-Borel group
  $\B_{\ell}$.
\end{proposition}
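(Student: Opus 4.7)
The approach mirrors the classical proof that a monomial ideal is strongly stable iff Borel-fixed in characteristic zero, adapted to the subgroup $\B_\ell$. The first step is to describe a convenient generating set for $\B_\ell$: an LU-type factorisation of the nontrivial lower $(\ell+1)$-row block of any $A\in\B_\ell$ writes $A$ as a product of scalings $\sigma_{p,\lambda}$ sending $x_p\mapsto\lambda x_p$ (for $p\ge n-\ell$, $\lambda\in\kk^*$, other variables fixed) and elementary shears $\tau_{p,q,c}$ sending $x_p\mapsto x_p+c x_q$ (for $p\ge n-\ell$, $q<p$, $c\in\kk$, other variables fixed). This factorisation does not use $\ch{\kk}$, so it suffices to check the equivalence of strong $\ell$-stability with invariance under these two families.

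For the ``only if'' implication, the scalings preserve $\I$ trivially since $\I$ is monomial. For a shear $\tau=\tau_{p,q,c}$ and a monomial generator $t=x_p^a s\in\I$ with $x_p\nmid s$, expand $\tau(t)=\sum_{k=0}^a\binom{a}{k}c^k x_p^{a-k}x_q^k s$. I would prove by induction on $k$ that each summand $x_p^{a-k}x_q^k s$ lies in $\I$: the base case $k=0$ is the hypothesis $t\in\I$, and the inductive step sets $t'=x_p^{a-k}x_q^k s\in\I$ with $a-k\ge 1$ (so $x_p\mid t'$ and $w(t')\ge p\ge n-\ell$) and applies strong $\ell$-stability at the index $p$ to conclude $x_q t'/x_p=x_p^{a-k-1}x_q^{k+1}s\in\I$. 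Consequently $\tau(t)\in\I$, and since $\tau$ is invertible $\tau(\I)=\I$.

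For the ``if'' implication, pick a minimal generator $t=x_j^a s\in\I$ with $j\ge n-\ell$, $x_j\nmid s$, $a\ge 1$, and fix $i<j$. The shear $\tau=\tau_{j,i,c}$ lies in $\B_\ell$, so by hypothesis $\tau(t)=\sum_{k=0}^a\binom{a}{k}c^k x_j^{a-k}x_i^k s$ belongs to $\I$ for every $c\in\kk$. Since $\kk$ is infinite and $\I$ is a $\kk$-subspace of $\P$, a Vandermonde argument on $a+1$ distinct values of $c$ forces each coefficient $\binom{a}{k}x_j^{a-k}x_i^k s$ to lie in $\I$; the assumption $\ch{\kk}=0$ then gives $\binom{a}{k}\ne 0$ in $\kk$, so each monomial $x_j^{a-k}x_i^k s\in\I$, and specialising to $k=1$ yields the desired $x_i t/x_j\in\I$. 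The only essential use of the characteristic zero hypothesis is this last step, and it is the standard obstacle: in positive characteristic some $\binom{a}{k}$ may vanish in $\kk$, and Borel invariance would no longer recover strong $\ell$-stability.
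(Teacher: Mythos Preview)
Your argument is correct and follows the same underlying idea as the paper's proof---expand the action on a monomial and observe that in characteristic zero every term of the expansion survives---but you organise it somewhat differently, and in one place you work harder than necessary.

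The paper acts with a general matrix $A\in\B_{\ell}$ at once, noting that every term in the support of $A\cdot t$ arises from $t$ by a chain of elementary moves $s\mapsto x_{j}s/x_{k}$ with $k\ge n-\ell$, so strong $\ell$-stability forces each such term into $\I$; it then runs a reverse-lexicographic elimination argument on the generating set to recover each $t$ from the transformed generators. Your reduction to scalings and single shears $\tau_{p,q,c}$ via an LU factorisation of $\B_{\ell}$ is cleaner: it replaces the chain-of-moves reasoning by a short induction on $k$ in the binomial expansion, and the group property then makes the elimination step unnecessary.

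For the converse, however, your Vandermonde argument over $a+1$ values of $c$ is superfluous. The ideal $\I$ is monomial, so already for a \emph{single} nonzero $c$ the membership $\tau_{j,i,c}(t)\in\I$ forces every term of its support into $\I$; the characteristic-zero hypothesis is then used exactly as you say, to guarantee that $\binom{a}{k}\neq0$ so that $x_{j}^{a-k}x_{i}^{k}s$ actually occurs in the support. This is precisely how the paper ``reverts the argument.'' Your route is not wrong, merely more general than needed: it would apply to an arbitrary $\kk$-subspace, whereas the monomiality of $\I$ gives the conclusion for free.
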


\begin{proof}
  Assume first that $\I$ is $\ell$-stable and consider a generating set $\H$
  of it.  The transformation induced by an element $A=(a_{ij})\in\B_{\ell}$ is
  of the form
  \begin{equation}\label{eq:lborel}
    \begin{array}{ll}
      x_{i}\rightarrow x_{i} & \mbox{if } i< n-\ell\,, \\
      x_{i}\rightarrow a_{ii}x_{i}+\sum_{j=n-l}^{i-1}a_{ij}x_{j}\quad & 
          \mbox{if } i\geq n-\ell\,. \\
    \end{array}
  \end{equation}
  One immediately sees that any generator $t\in\H$ with $w(t)<n-\ell$ remains
  unchanged under the action of $A$.  If $w(t)\geq n-\ell$, then $t$ is
  transformed into a polynomial $f_{t}=A\cdot t$.  It follows again from
  \eqref{eq:lborel} that any term in the support of $f_{t}$ is obtained from
  $t$ by applying a sequence of ``elementary moves'' of the form $s\rightarrow
  x_{j}s/x_{k}$ with $j<k$ where $x_{k}\mid s$.  In this sequence we always
  have $k\geq n-\ell$ and thus the strong $\ell$-stability of $\I$ implies
  that all appearing terms $s$ lie in $\I$.  Furthermore, $t$ itself always
  lies in the support of $f_{t}$.

  Consider now the elements $t$ of $\H$ with $w(t)\geq n-\ell$ sorted reverse
  lexicographically.  If $t$ is the largest term among these, then $w(s)<w(t)$
  for all $s\neq t$ appearing in the support of $f_{t}$. Thus they are
  multiples of elements of $\H$ which remain unchanged under the operation of
  $A$ and can be eliminated.  If $t$ is the second largest term, then the
  support may in addition contain multiples of the largest term; otherwise we
  can apply the same argument.  By iteration, we obtain that the whole ideal
  remains invariant.

  For the converse, we need the assumption on the characteristic.  If
  $\ch{\kk}=0$ (and thus no coefficient drops out when we transform a term),
  then we may revert the above arguments: if $\I$ is invariant under
  $\B_{\ell}$, then all terms appearing in the support of $f_{t}$ must lie in
  $\I$ and hence $\I$ is strongly $\ell$-stable.
\end{proof}

In relation to our previous work \cite{wms:quasigen}, it is of interest to
show that a $D$-stable ideal is automatically quasi-stable.  The proof depends
on the following characterisation of $\ell$-stability which is of independent
interest.

\begin{proposition}\label{prop:lstab}
  The monomial ideal $\I\unlhd\P$ is $\ell$-stable, if and only if it
  satisfies for all $0\leq i\leq\ell$
  \begin{equation}\label{eq:lstab}
    \langle\I,x_{n},\dots,x_{n-i+1}\rangle:x_{n-i}=
    \langle\I,x_{n},\dots,x_{n-i+1}\rangle:\mf\,.
  \end{equation}
\end{proposition}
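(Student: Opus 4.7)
The plan is to translate both sides of \eqref{eq:lstab} into purely monomial language and then match that translation against the defining condition of $\ell$-stability. Writing $\J_{i}=\langle\I,x_{n},\dots,x_{n-i+1}\rangle$, I begin with the elementary observation that for any monomial $m$ one has $m\in\J_{i}$ exactly when either $m\in\I$ or $w(m)>n-i$. In particular $\J_{i}:x_{k}=\P$ for $k>n-i$, so the equality $\J_{i}:x_{n-i}=\J_{i}:\mf$ is equivalent to the inclusions $\J_{i}:x_{n-i}\subseteq\J_{i}:x_{k}$ for all $k<n-i$. Spelled out on monomials, this says: whenever a monomial $t$ satisfies $t\notin\I$, $w(t)\leq n-i$ and $tx_{n-i}\in\I$, then necessarily $tx_{k}\in\I$ for every $k<n-i$.

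For the forward direction, I would assume that $\I$ is $\ell$-stable, fix $0\leq i\leq\ell$, and pick $t$ as above. Setting $s=tx_{n-i}$ one has $s\in\I$ and, because $w(t)\leq n-i$, also $w(s)=n-i\geq n-\ell$. For any $k<n-i=w(s)$, $\ell$-stability applied to $s$ delivers $x_{k}s/x_{w(s)}=x_{k}t\in\I$, which is exactly the reformulated colon condition.

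For the converse, let $s\in\I$ with $w(s)\geq n-\ell$ and $k<w(s)$, put $j=n-w(s)\in\{0,\dots,\ell\}$, and let $t=s/x_{w(s)}$. If $t\in\I$ the target $x_{k}t$ lies in $\I$ trivially, so assume $t\notin\I$. Combined with $w(t)\leq w(s)=n-j$ this yields $t\notin\J_{j}$, while $tx_{n-j}=s\in\I\subseteq\J_{j}$ places $t$ in $\J_{j}:x_{n-j}=\J_{j}:\mf$ by hypothesis. In particular $tx_{k}\in\J_{j}$; but $w(tx_{k})\leq n-j$, so $tx_{k}$ is divisible by none of $x_{n-j+1},\dots,x_{n}$ and must therefore lie in $\I$. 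This is precisely $x_{k}s/x_{w(s)}\in\I$, proving $\ell$-stability.

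The only genuine subtlety is in the backward direction: one has to recognise the correct index $j=n-w(s)$ to which hypothesis \eqref{eq:lstab} must be applied, and to dispose of the trivial case $t\in\I$ before verifying $t\notin\J_{j}$, since otherwise the colon-ideal equality gives no information about $t$. The remainder of the argument is bookkeeping based on the membership criterion ``$m\in\J_{i}$ iff $m\in\I$ or $w(m)>n-i$''.
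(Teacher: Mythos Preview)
Your proof is correct and follows essentially the same route as the paper's. Both directions proceed identically in substance: in the forward direction you take a monomial $t$ in $\J_{i}:x_{n-i}$, observe $w(tx_{n-i})=n-i\geq n-\ell$, and apply $\ell$-stability to obtain $x_{k}t\in\I$; in the backward direction you set $i=n-w(s)$, pass to $t=s/x_{w(s)}$, use the colon equality to get $x_{k}t\in\J_{i}$, and then exploit $w(x_{k}t)\leq n-i$ to conclude $x_{k}t\in\I$. Your explicit reformulation of $\J_{i}:x_{n-i}=\J_{i}:\mf$ in purely monomial terms and your separate treatment of the trivial case $t\in\I$ are minor presentational additions, not a different strategy.
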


\begin{proof}
  Assume first that $\I$ is $\ell$-stable and let $t$ be a term such that
  $x_{n-i}t\in\langle\I,x_{n},\dots,x_{n-i+1}\rangle$ for some $i\leq\ell$.
  If $w(t)>n-i$, then $t\in\langle x_{n},\dots,x_{n-i+1}\rangle$ and nothing
  is to be proven.  Otherwise we have $x_{n-i}t\in\I$ and $w(x_{n-i}t)=n-i\geq
  n-\ell$.  Because of the $\ell$-stability, this entails that
  $x_{j}t=x_{j}(x_{n-i}t)/x_{n-i}\in\I$ for all $j\leq n-\ell$.  Hence
  $t\langle x_{1},\dots,x_{n-i}\rangle\subseteq\I$ implying
  $t\mf\subseteq\langle\I,x_{n},\dots,x_{n-i+1}\rangle$.

  For the converse consider a term $t\in\I$ with $w(t)=n-i\geq n-\ell$.
  Because of (\ref{eq:lstab}), we have
  $t/x_{n-i}\in\I:x_{n-i}\subseteq\langle\I,x_{n},\dots,x_{n-i+1}\rangle:\mf$.
  Hence $x_{j}t/x_{n-i}\in\langle\I,x_{n},\dots,x_{n-i+1}\rangle$ for all
  $j\leq n$.  If $j\leq n-i$, then $w(x_{j}t/x_{n-i})\leq n-i$ and thus we
  must have $x_{j}t/x_{n-i}\in\I$ so that $\I$ is $\ell$-stable.
\end{proof}

\begin{corollary}\label{cor:dstabqstab}
  A $D$-stable monomial ideal $\I$ is quasi-stable.
\end{corollary}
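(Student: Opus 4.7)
The plan is to invoke the colon-based characterisation of quasi-stability (see~\cite{wms:comb1,wms:quasigen}): a monomial ideal $\I$ is quasi-stable if and only if, for every $0\le i\le n-1$,
\[
\langle\I,x_n,\ldots,x_{n-i+1}\rangle:x_{n-i}^\infty \;=\; \langle\I,x_n,\ldots,x_{n-i+1}\rangle:\mf^\infty.
\]
We verify this in two regimes. For $i\le D$, Proposition~\ref{prop:lstab} directly provides the corresponding \emph{single}-colon identity $J:x_{n-i}=J:\mf$ with $J:=\langle\I,x_n,\ldots,x_{n-i+1}\rangle$; a routine induction on $N$ then upgrades it to $J:x_{n-i}^N=J:\mf^N$ for all $N\ge 1$ (the key step: if $x_{n-i}^{N+1}f\in J$, then $x_{n-i}f\in J:x_{n-i}^N=J:\mf^N$, so $x_{n-i}\cdot\mf^N f\subseteq J$, and the single-colon identity applied elementwise yields $\mf^{N+1}f\subseteq J$), and taking unions over $N$ gives the saturated identity.

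For $i\ge D+1$ we show that $J=\langle\I,x_n,\ldots,x_{n-i+1}\rangle$ is $\mf$-primary, so that both sides of the identity equal $\P$. Since $J\supseteq\langle x_{n-i+1},\ldots,x_n\rangle$, it suffices to establish $\sqrt{\I}=\langle x_1,\ldots,x_{n-D}\rangle$---equivalently, that $\I$'s unique minimal prime is $\langle x_1,\ldots,x_{n-D}\rangle$. First we rule out minimal generators of $\I$ supported entirely in $\{x_{n-D+1},\ldots,x_n\}$: for such a generator $t$, setting $k_0:=w(t)>n-D$, iterated application of the $D$-stability condition---at each step replacing a factor of $x_{k_0}$ in the current monomial by some $x_j$ with $j<k_0$---produces $x_j^{\deg t}\in\I$ for every $j<k_0$, forcing $\langle x_1,\ldots,x_{k_0}\rangle\subseteq\sqrt\I$ and $\dim\I<D$, a contradiction. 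Consequently $\I\subseteq\langle x_1,\ldots,x_{n-D}\rangle$, so this prime is indeed a minimal prime of $\I$.

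The hardest step will be to prove uniqueness of this minimal prime. Any hypothetical second minimal prime $P'=\langle x_s:s\in S'\rangle$ must have $j:=\max S'>n-D$ and $|S'|=n-D$. Choosing, via the primary decomposition of $\I$, a witness $m$ for $\I:m=P'$ with $w(m)\le j$---achieved by modifying a naive witness within the $P'$-primary component---the prime $P'+\langle x_{j+1},\ldots,x_n\rangle$ is associated to $\langle\I,x_n,\ldots,x_{j+1}\rangle$ and contains $x_j$. Proposition~\ref{prop:lstab} at index $i=n-j\in\{0,\ldots,D-1\}$ then forces this prime to equal $\mf$, i.e.\ $\{1,\ldots,j\}\subseteq S'$, contradicting $|S'|=n-D<j$. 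Arranging a witness with $w(m)\le j$ is the delicate ingredient here, since a naive construction of $m$ typically carries extra variable factors from outside~$S'$.
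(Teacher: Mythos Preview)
Your treatment of the range $i\le D$ mirrors the paper's proof: iterate the single-colon identity of Proposition~\ref{prop:lstab} to the saturated version. Where you diverge is in how you conclude. The paper does \emph{not} verify the saturation identity for $i>D$; instead it observes that the identities already established for $i\le D$ imply that $x_{n-i}$ is a non-zero-divisor on $\P/\bigl(\langle\I,x_n,\dots,x_{n-i+1}\rangle:\mf^{\infty}\bigr)$ for $0\le i<D$, and then invokes \cite[Prop.~4.4]{wms:comb2}, a characterisation of quasi-stability requiring only these $D$ conditions. Your chosen characterisation demands the identity for all $i$, which forces you into the range $i>D$.

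Your argument there has genuine gaps. First, the claim $|S'|=n-D$ is unjustified: minimal primes of a monomial ideal need not be equidimensional (take $\langle x_1x_2,x_1x_3\rangle$). The contradiction should instead come from $\{1,\dots,j\}\subseteq S'$ giving $P'\supsetneq\langle x_1,\dots,x_{n-D}\rangle$, which violates minimality of $P'$. More seriously, the witness $m$ with $\I:m=P'$ and $w(m)\le j$ is never constructed---you explicitly flag this as ``delicate'' and leave it open. For a general monomial ideal such an $m$ need not exist (e.g.\ $\I=\langle x_3x_4\rangle\subset\kk[x_1,\dots,x_4]$, $P'=\langle x_3\rangle$, $j=3$: every witness for $P'$ must be divisible by $x_4$), so any proof would have to exploit $D$-stability in a way you do not indicate.

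Even staying with your characterisation, the case $i\ge D+1$ admits a one-line argument that bypasses minimal primes entirely: $D$-stability implies weak $D$-stability, and the argument behind Proposition~\ref{prop:wdsnp} (via Remark~\ref{rem:weaklstab}) gives $x_j^{e_j}\in\I$ for $1\le j\le n-D$. Hence $\langle\I,x_n,\dots,x_{n-i+1}\rangle$ contains a power of every variable whenever $n-i+1\le n-D+1$, i.e.\ $i\ge D$, and both saturations equal~$\P$.
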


\begin{proof}
  According to the previous proposition, (\ref{eq:lstab}) holds for all $0\leq
  i\leq D$.  As a preparatory step, we claim that this fact implies that for
  these values of $i$ also
  \begin{equation}\label{eq:lstabsat}
    \langle\I,x_{n},\dots,x_{n-i+1}\rangle:x_{n-i}^{\infty}=
    \langle\I,x_{n},\dots,x_{n-i+1}\rangle:\mf^{\infty}\,.
  \end{equation}
  Indeed, if the term $t$ lies in the ideal on the left hand side, then an
  integer $s$ exists such that
  $x_{n-i}^{s}t\in\langle\I,x_{n},\dots,x_{n-i+1}\rangle$ and therefore
  \begin{displaymath}
    x_{n-i}^{s-1}t\in\langle\I,x_{n},\dots,x_{n-i+1}\rangle:x_{n-i}=
    \langle\I,x_{n},\dots,x_{n-i+1}\rangle:\mf\,.
  \end{displaymath}
  Applying this argument a second time yields
$$  \begin{array}{ll}
    x_{n-i}^{s-2}t &\in \bigl(\langle\I,x_{n},\dots,x_{n-i+1}\rangle
                        :\mf\bigr):x_{n-i}\\

    &= \bigl(\langle\I,x_{n},\dots,x_{n-i+1}\rangle:x_{n-i}\bigr):\mf\\
    &= \langle\I,x_{n},\dots,x_{n-i+1}\rangle:\mf^{2}\,.
  \end{array}$$
  Thus we find by iteration that
  $t\in\langle\I,x_{n},\dots,x_{n-i+1}\rangle:\mf^{s}$ proving the claim.

  It follows that $x_{n-i}$ is not a zero divisor in
  $\P/(\langle\I,x_{n},\dots,x_{n-i+1}\rangle:\mf^{\infty})$ for all $0\leq i<
  D$.  Indeed, if $f\in\P$ satisfies
  $x_{n-i}f\in\langle\I,x_{n},\dots,x_{n-i+1}\rangle:\mf^{\infty}$, then an
  exponent $s$ exists such that
  $x_{n-i}f\mf^{s}\subseteq\langle\I,x_{n},\dots,x_{n-i+1}\rangle$ and hence
  $x_{n-i}^{s+1}f\in\langle\I,x_{n},\dots,x_{n-i+1}\rangle$.  But this
  implies $f\in \langle\I,x_{n},\dots,x_{n-i+1}\rangle:x_{n}^{\infty}=
  \langle\I,x_{n},\dots,x_{n-i+1}\rangle:\mf^{\infty}$.  Now the assertion
  follows from \cite[Prop.~4.4]{wms:comb2}.
\end{proof}

\begin{example}
  Weak $D$-stability is not sufficient for quasi-stability, as one can see
  from the ideal $\langle x_{1}^{2},x_{1}x_{3}\rangle$ where $n=3$ and $D=2$.
  One easily verifies that it is weakly $D$-stable but not quasi-stable.  And
  for the same values of $n$ and $D$ the ideal $\langle x_{1}^{3},
  x_{1}x_{2}\rangle$ shows that the converse of Corollary \ref{cor:dstabqstab}
  does not hold, as it is quasi-stable but not (weakly) $D$-stable.
\end{example}

\begin{remark}\label{rem:weaklstab}
  Assume that the monomial ideal $\I$ is weakly $\ell$-stable for some $\ell$
  and that $t=x_{1}^{\alpha_{1}}\cdots x_{n}^{\alpha_{n}}\in\I$.  It follows
  immediately from Definition \ref{def:lstab} that any term of the form
  $x_{1}^{\alpha_{1}+\beta_{1}}\cdots
  x_{n-\ell}^{\alpha_{n-\ell}+\beta_{n-\ell}}$ with
  $\beta_{1}+\cdots+\beta_{n-\ell}= \alpha_{n-\ell+1}+\cdots+\alpha_{n}$ is
  then also contained in $\I$.  If we introduce for $1\leq j\leq\ell$ the
  homogeneous polynomials
  \begin{displaymath}
    g_{j}=\sum_{\beta_{1}^{(j)}+\cdots+\beta_{n-\ell}^{(j)}=\alpha_{n-\ell+j}}
        a^{(j)}_{\beta_{1}^{(j)},\dots,\beta_{n-\ell}^{(j)}}
        x_{1}^{\beta_{1}^{(j)}}\cdots x_{n-\ell}^{\beta_{n-\ell}^{(j)}}
  \end{displaymath}
  with arbitrary coefficients
  $a^{(j)}_{\beta_{1}^{(j)},\dots,\beta_{n-\ell}^{(j)}}\in\kk$, then it
  follows from the observation above that the polynomial
  \begin{displaymath}
    f_{t}=x_{1}^{\alpha_{1}}\cdots x_{n-\ell}^{\alpha_{n-\ell}}g_{1}\cdots g_{\ell}
  \end{displaymath}
  also lies in $\I$.  Each term in its support is of the form
  $x_{1}^{\alpha_{1}+\beta_{1}}\cdots
  x_{n-\ell}^{\alpha_{n-\ell}+\beta_{n-\ell}}$ with
  $\beta_{i}=\beta_{i}^{(1)}+\cdots+\beta_{i}^{(\ell)}$ and by construction
  $\beta_{1}+\cdots+\beta_{n-\ell}= \alpha_{n-\ell+1}+\cdots+\alpha_{n}$.
\end{remark}

\begin{proposition}\label{prop:wdsnp}
  A weakly $D$-stable ideal $\I$ is always in Noether position.
\end{proposition}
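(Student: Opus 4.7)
The plan is to verify that $x_{n-D+1},\dots,x_{n}$ give a Noether normalisation of $\R=\P/\I$, which amounts to two conditions: (i) $\R$ is a finite module over the subring $A=\kk[x_{n-D+1},\dots,x_{n}]$, and (ii) the natural map $A\to\R$ is injective. Because $\I$ is a monomial ideal, (i) is equivalent to the existence, for every $i\in\{1,\dots,n-D\}$, of a pure power $x_{i}^{e_{i}}\in\I$: the finitely many residue classes of the monomials $x_{1}^{a_{1}}\cdots x_{n-D}^{a_{n-D}}$ with $a_{i}<e_{i}$ then span $\R$ over $A$.

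To establish (i), fix $i\le n-D$. The $(D+1)$-element set of variables $\{x_{i},x_{n-D+1},\dots,x_{n}\}$ cannot be algebraically independent modulo $\I$, because $\dim\I=D$. Since $\I$ is a monomial ideal, this forces the existence of a monomial $m=x_{i}^{a}x_{n-D+1}^{b_{n-D+1}}\cdots x_{n}^{b_{n}}\in\I$. Setting $B=b_{n-D+1}+\cdots+b_{n}$, the case $B=0$ gives $x_{i}^{a}\in\I$ directly. Otherwise the key step is to invoke Remark \ref{rem:weaklstab} on $m$, with $\alpha_{i}=a$, all remaining $\alpha_{j}$ for $j\le n-D$ equal to zero, and $\alpha_{j}=b_{j}$ for $j>n-D$. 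Choosing each of the auxiliary forms $g_{j}$ to be the monomial $x_{i}^{b_{n-D+j}}$ collapses the resulting polynomial $f_{m}$ to the single term $x_{i}^{a+B}$, which must therefore lie in $\I$.

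Once (i) is in place, (ii) follows from a standard dimension count. The module-finite extension $A\to\R$ has image $A/\ker$ of Krull dimension equal to $\dim\R=D$; but $\dim A=D$ and $A$ is a domain, so $\ker$ must be trivial, proving the algebraic independence of $x_{n-D+1},\dots,x_{n}$ modulo $\I$.

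The only non-routine step is the application of Remark \ref{rem:weaklstab} to transform the purely dimension-theoretic existence of $m\in\I$ into the precise divisibility statement $x_{i}^{a+B}\in\I$; this collapse to a single variable is exactly what weak $D$-stability (iterated) buys us, and everything else in the argument is commutative-algebra bookkeeping.
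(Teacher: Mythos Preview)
Your proof is correct and follows essentially the same route as the paper's: both use the dimension bound $\dim\I=D$ to locate a monomial of $\I$ in $\kk[x_{i},x_{n-D+1},\dots,x_{n}]$ and then invoke Remark~\ref{rem:weaklstab} to collapse it to a pure power of $x_{i}$. Your version is slightly tidier in that it avoids the paper's case split on whether $\I\cap\kk[x_{n-D+1},\dots,x_{n}]$ is empty and makes the injectivity half of Noether normalisation explicit (the paper absorbs this into the stated criterion ``$x_{j}^{e_{j}}\in\I$ for all $j\le n-D$''), but the core argument is identical.
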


\begin{proof}
  A $D$-dimensional monomial ideal is in Noether position, if and only if for
  all $1\leq j\leq n-D$ a pure power $x_{j}^{e_{j}}$ is contained in $\I$.
  Assume first that there exists a term
  $t\in\I\cap\kk[x_{n-D+1},\dots,x_{n}]$.  Then Remark \ref{rem:weaklstab}
  immediately implies for $e=\deg{t}$ that $x_{j}^{e}\in\I$ for all $1\leq
  j\leq n-D$ and we are done.  If
  $\I\cap\kk[x_{n-D+1},\dots,x_{n}]=\emptyset$, then the $D$-dimensional cone
  $1\cdot\kk[x_{n-D+1},\dots,x_{n}]$ lies completely in the complement of
  $\I$.  Assume that for some $1\leq j\leq n-D$ no power of $x_{j}$ was
  contained in $\I$.  Since $D=\dim{\I}$, it is not possible that the
  complement of $\I$ contains a $(D+1)$-dimensional cone.  Thus we must have
  $\I\cap\kk[x_{j},x_{n-D+1},\dots,x_{n}]\neq\emptyset$.  But if a term $t$ of
  degree $e$ lies in this intersection, then again by Remark
  \ref{rem:weaklstab} $x_{j}^{e}\in\I$ in contradiction to our assumption.
\end{proof}

The simple Algorithm \ref{alg:wdstest} verifies whether a given monomial ideal
is weakly $D$-stable without a priori knowledge of the dimension $D$ of $\I$.
For showing its correctness, we note that if $\I$ is weakly $D$-stable, then
the number $d$ computed in Line 2 equals $D$ by Proposition \ref{prop:wdsnp}
and by Definition \ref{def:lstab} of weak $D$-stability we never get to Line
6.  If $\I$ is not weakly $D$-stable, then $d\geq D$ (this estimate holds for
any monomial ideal) and soon or later we will reach Line 6.  The bit
complexity of the algorithm is polynomial in $kn$, as one can easily see that
the number of operations in the two \textbf{for}-loops is at most $k^2n^3$.

\begin{algorithm}
\caption{\textsc{WDS-Test}: Test for weak $D$-stability}\label{alg:wdstest}
\begin{algorithmic}[1]
  \REQUIRE minimal basis $G=\{m_{1},\dots,m_{k}\}$ of monomial ideal $\I\lhd\P$
  \ENSURE The answer to: is $\I$ weakly $D$-stable?
  \STATE $e:=\max{\{\deg(m_1),\ldots ,\deg(m_k)\}}$

  \STATE $d:=$ smallest $\ell$ such that $x_i^{e}\in\I$ for $i=1,\ldots,n-\ell$
  \FORALL {$x_{1}^{e_{1}}\cdots x_{h}^{e_{h}}\in G$ with $h\ge n-d$ and $e_{h}>0$}
    \FOR {$j=1,\ldots ,n-d$} 
      \IF {$x_{1}^{e_{1}}\cdots x_{h-1}^{e_{h-1}}x_{h}^{e_{h}-1}x_{j}\notin 
           \langle G\rangle$}
        \STATE \textbf{return} false
      \ENDIF
    \ENDFOR
  \ENDFOR 
  \STATE \textbf{return} true
\end{algorithmic}
\end{algorithm}

\section{Weak $D$-Stability for Polynomial Ideals}
\label{sec:polywdstab}

In the previous section, we considered exclusively monomial ideals.  All the
notions introduced in Definition \ref{def:lstab} can be straightforwardly
extended to polynomial ideals by saying that an ideal $\I$ satisfies some form
of stability, if its leading ideal $\lt{\I}$ satisfies this form of stability.
Galligo's Theorem \ref{thm:gal} immediately implies that after a generic
change of coordinate $A\in\mathrm{GL}(n,\kk)$ the transformed ideal $A\cdot\I$
possesses any stability property here considered.  Thus in principle a random
coordinate transformation (almost) always provides a ``nice'' leading ideal.

However, from a computational point of view, random transformations are rather
unpleasant, as they destroy all sparsity typically present in ideal bases.  It
is therefore of great interest to see whether for some notion of stability it
is possible to design a \emph{deterministic} algorithm which yields a fairly
sparse transformation $A$ such that $A\cdot\I$ has the desired stability
property.  In a forthcoming work \cite{wms:effgen}, we will study this
question in depth and provide such an algorithm for many important stability
notions.  Here, we only present a variation of this algorithm for the case of
weak $D$-stability.  For lack of space, we omit the (non-trivial) termination
proof which will be given in \cite{wms:effgen}.

Algorithm \ref{algo:wdstrafo} works by performing incrementally very sparse
transformations where all variables except one remain unchanged and this one
undergoes a transformation of the form $x_{i}\rightarrow x_{i}+ax_{j}$ where
$j<i$ and $a\in\kk\setminus\{0\}$ is a generic parameter.  The pair $(i,j)$ is
chosen in such a way that each transformation leads to true progress towards a
weakly $D$-stable position, if $a$ does not take one of finitely many ``bad''
values.  In practice, we always use the value $a=1$.  If this accidentally
represents a ``bad'' value, then we will automatically perform the same
transformation a second time which corresponds to $a=2$.  Obviously, after a
finite number of iterations (which can be bounded via the degrees of the
generators), we will reach a ``good'' value, since $\kk$ is an infinite field.

\begin{algorithm}
  \caption{\textsc{WDS-Trafo}: Transformation to weakly $D$-stable
    position}\label{algo:wdstrafo} 
  \begin{algorithmic}[1]
    \REQUIRE Gr\"obner basis $G$ of homogeneous ideal $\I\unlhd\P$
    \ENSURE  a linear change of coordinates $\Psi$ such that $\Psi(\I)$ is
             weakly $D$-stable 
    \STATE $D:=\dim{\I}$; $\Psi:=\mathrm{id}$
    \WHILE {$\exists\, g\in G,\ 1\leq j\leq n-D\,:\,
                 i=w(\lt{g})\geq n-D\wedge 
                 x_{j}\lt{(g)}/x_{i}\notin\langle\lt{G}\rangle$}
        \STATE $\psi:=(x_{i}\mapsto x_{i}+x_{j})$; $\Psi=\psi\circ\Psi$
        \STATE $G:=\text{\textsc{Gr\"obnerBasis}}\bigl(\psi(G)\bigr)$
    \ENDWHILE
    \STATE \textbf{return} $\Psi$
  \end{algorithmic}
\end{algorithm}

Algorithm \ref{algo:wdstrafo} is not in an optimised form.  In practice, if
one finds more than one suitable pair $(i,j)$, it appears natural to perform
several transformations simultaneously, as each iteration of the
\textbf{while} loop requires a Gr\"obner basis computation.  Furthermore, one
should take into account that the input for these computations is typically
already fairly close to a Gr\"obner basis.  Hence it is probably useful to
apply some specialised algorithm exploiting this fact.  A prototype
implementation of Algorithm \ref{algo:wdstrafo} in \textsc{Maple} can be found
at \url{http://amirhashemi.iut.ac.ir/softwares}.

\begin{example}
  We consider for $n=3$ the ideal $\I=\langle x_{1}^{3}, x_{2}^{2}x_{3},
  x_{2}^{3}\rangle$ with $D=1$.  This ideal is not weakly $D$-stable, since
  $x_{1}(x_{2}^{2}x_{3})/x_{3}\notin\I$ and, according to Algorithm
  \ref{algo:wdstrafo}, we perform the change of coordinates
  $\psi_{1}:x_{3}\mapsto x_{1}+x_{3}$.  The transformed ideal
  $\I_{1}=\psi_{1}(\I)$ has the leading ideal $\langle x_{1}^{3},
  x_{1}x_{2}^{2}, x_{2}^{3}, x_{2}^{2}x_{3}^{3}\rangle$ and is also not
  $D$-stable, since $x_{1}(x_{1}x_{2}^{2})/x_{2}\notin\lt{\I_{1}}$.  Thus in
  the second iteration the \textbf{while} loop performs the change of
  coordinate $\psi_{2}:x_{2}\mapsto x_{1}+x_{2}$.  The leading ideal of the
  transformed ideal $\I_{2}=\psi_{2}(\I_{1})$ is by chance even the generic
  initial ideal $\gin{\I}=\langle x_{1}^{3}, x_{1}^{2}x_{2}, x_{1}x_{2}^{2},
  x_{2}^{4}, x_{1}^{2}x_{3}^{3}\rangle$ and thus of course weakly $D$-stable.
\end{example}

\section{Computing the Absolute Reduction Number}
\label{sec:absrednum}

We consider first the case of a monomial ideal and extend Theorem
\ref{thm:redstable} from strongly stable ideals to weakly $D$-stable ones.
Our proof follows closely the arguments of the original proof by Bresinsky and
Hoa \cite{bh:rednum}.

\begin{theorem}\label{thm:rnwds}
  Let $\I\unlhd\P$ be a weakly $D$-stable monomial ideal.  Then $\I$ has a
  minimal generator $x_{n-D}^{\alpha}$ and $r(\R)=r_{\J}(\R)=\alpha-1$ for any
  minimal reduction~$\J$ of $\I$.
\end{theorem}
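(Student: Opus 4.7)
The plan is to mimic the structure of the Bresinsky--Hoa proof of Theorem \ref{thm:redstable} but with weak $D$-stability replacing strong stability. Since $\I$ is weakly $D$-stable, Proposition \ref{prop:wdsnp} puts it in Noether position, so some pure power of $x_{n-D}$ belongs to $\I$; I would choose $\alpha$ to be the minimal such exponent, which makes $x_{n-D}^{\alpha}$ a minimal generator. Now fix an arbitrary minimal reduction $\J$. By Lemma \ref{lem:minred}, $\J = \I + \langle y_1, \ldots, y_D\rangle$ with $y_i = x_{n-D+i} + \sum_{j=1}^{n-D} a_{ij} x_j$. The key tool will be the surjective $\kk$-algebra homomorphism $\pi\colon \P \to \kk[x_1, \ldots, x_{n-D}]$ that fixes $x_1, \ldots, x_{n-D}$ and sends each $x_{n-D+i}$ to the linear form $\ell_i := -\sum_{j=1}^{n-D} a_{ij} x_j$; its kernel is exactly $\langle y_1, \ldots, y_D\rangle$, so it induces a degree-preserving isomorphism $\P/\J \cong \kk[x_1, \ldots, x_{n-D}]/\pi(\I)$, and $r_{\J}(\R)$ equals the top non-vanishing degree of this graded quotient.

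For the upper bound $r_{\J}(\R) \leq \alpha - 1$, I would show that every monomial of degree $\alpha$ in $x_1, \ldots, x_{n-D}$ already lies in $\I$. Starting from $x_{n-D}^{\alpha}$ and iterating the defining relation of weak $D$-stability, each step replaces one factor $x_{n-D}$ by some $x_i$ with $i \leq n-D$; this is legal at every step because the current monomial has weight exactly $n-D$ as long as a positive power of $x_{n-D}$ remains, and the very last step turns the surviving $x_{n-D}^{1}$ into the missing variable. A straightforward induction on the number of replacements then produces every monomial $x_1^{e_1}\cdots x_{n-D}^{e_{n-D}}$ of total degree $\alpha$ inside $\I$. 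Because these monomials are fixed by $\pi$, they lie in $\pi(\I)$, so the degree-$\alpha$ component of $\kk[x_1,\ldots,x_{n-D}]/\pi(\I)$ vanishes, and with it all higher components.

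For the lower bound $r_{\J}(\R) \geq \alpha - 1$, the strategy is to establish the containment $\pi(\I) \subseteq \I$ and then exploit the minimality of $\alpha$. Given any monomial generator $m = x_1^{\alpha_1} \cdots x_n^{\alpha_n}$ of $\I$, its image is $\pi(m) = x_1^{\alpha_1} \cdots x_{n-D}^{\alpha_{n-D}}\, \ell_1^{\alpha_{n-D+1}} \cdots \ell_D^{\alpha_n}$, which has exactly the shape $f_t$ from Remark \ref{rem:weaklstab} with the choice $g_j = \ell_j^{\alpha_{n-D+j}}$, a homogeneous polynomial of the required degree $\alpha_{n-D+j}$ in $x_1, \ldots, x_{n-D}$. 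The remark therefore yields $\pi(m) \in \I$, and hence $\pi(\I) \subseteq \I \cap \kk[x_1, \ldots, x_{n-D}]$. Since $x_{n-D}^{\alpha - 1} \notin \I$ by our choice of $\alpha$, it does not lie in $\pi(\I)$ either, so its class in $\P/\J$ is nonzero and $r_{\J}(\R) \geq \alpha - 1$.

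The main obstacle will be the containment $\pi(\I) \subseteq \I$: this is not at all obvious without a tool that controls the effect of substituting each $x_{n-D+j}$ by an arbitrary linear form in $x_1, \ldots, x_{n-D}$, and Remark \ref{rem:weaklstab} is tailor-made for this because its $g_j$ are allowed to be \emph{arbitrary} homogeneous polynomials of the correct degree. Without the remark one would have to argue directly on the support of $\pi(m)$ term by term, which would effectively force the monomial ideal to be strongly stable and make the generalisation collapse. Everything else, including the equality $r(\R) = \alpha - 1$ for the absolute reduction number, then drops out because the identity $r_{\J}(\R) = \alpha - 1$ holds for \emph{every} minimal reduction $\J$.
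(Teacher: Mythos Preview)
Your proposal is correct and follows essentially the same approach as the paper: both use Proposition~\ref{prop:wdsnp} and Lemma~\ref{lem:minred} to reduce to the substitution map into $\kk[x_{1},\dots,x_{n-D}]$, invoke Remark~\ref{rem:weaklstab} for the key containment $\pi(\I)\subseteq\I$, and use weak $D$-stability of $x_{n-D}^{\alpha}$ to place all degree-$\alpha$ monomials of $\kk[x_{1},\dots,x_{n-D}]$ in $\I$. The only cosmetic difference is that the paper phrases the argument as an equality $\I_{1}=\I_{2}$ between the images under the general and the standard substitution and then computes $r_{\J_{2}}(\R)$, whereas you bound $r_{\J}(\R)$ from above and below directly.
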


\begin{proof}
  Since $\I$ is assumed to be weakly $D$-stable, $x_{n-D+1},\dots,x_{n}$
  induce a minimal reduction by Proposition \ref{prop:wdsnp} and we can apply
  Lemma \ref{lem:minred}.  Consider the $D$ linear forms
  $y_i=x_{n-D+i}+a_{i,1}x_1+\cdots+a_{i,n-D}x_{n-D}$ with $1\leq i\leq D$ and
  arbitrary coefficients $a_{i,j}\in \kk$ and set $\J_{1}=\I+\langle
  y_1,\ldots,y_D \rangle$.

  We claim that $r_{\J_{1}}(\R)=r_{\J_{2}}(\R)$ where $\J_{2}=\I+\langle
  x_{n-D+1},\dots,x_{n} \rangle$.  It is enough to show the identity
  $\I_1=\I_2$ where $\P/\J_{1}\simeq \kk[x_1,\ldots,x_{n-D}]/\I_1$ and
  $\P/\J_{2}\simeq \kk[x_1,\ldots,x_{n-D}]/\I_2$.  One easily sees that
  $\I_2=\I \cap \kk[x_1,\ldots,x_{n-D}]$ and thus trivially $\I_2\subseteq
  \I_1$.  The converse inclusion $\I_1\subseteq \I_2$ follows by Remark
  \ref{rem:weaklstab} which entails that for any term $t=x_1^{\alpha_1}\cdots
  x_n^{\alpha_n}\in \I$ the corresponding term
  \begin{displaymath}
    \tilde t=x_1^{\alpha_1}\cdots x_{n-D}^{\alpha_{n-D}}
    \prod_{j=1}^{D}(-a_{j,1}x_1-\cdots-a_{j,n-D}x_{n-D})^{\alpha_{n-D+j}}\in\I_{1}
  \end{displaymath}
  also lies in $\I$ and hence in $\I_{2}$.

  Proposition \ref{prop:wdsnp} also implies that $\I$ has a minimal generator
  of the form $x_{n-D}^\alpha$ for some $\alpha\in\NN$.  Hence,
  $r_{\J_{2}}(\R)\ge \alpha-1$.  On the other hand, $x_{n-D}^\alpha\in \I$
  implies by Remark \ref{rem:weaklstab} that any term $x_1^{\alpha_1}\cdots
  x_{n-D}^{\alpha_{n-D}}$ of degree $\alpha$ also belongs to $\I$ and thus
  $r_{\J_{2}}(\R)\le \alpha-1$. Therefore $r_{\J_{2}}(\R)=\alpha-1$ proving
  the second assertion. 
\end{proof}

We have thus identified a class of monomial ideals, the weakly $D$-stable
ideals, for which it is particularly simple to determine their reduction
number.  Given a polynomial ideal $\I$, we may use Algorithm
\ref{algo:wdstrafo} to render it weakly $D$-stable and obtain then immediately
the reduction number of its leading ideal $\lt{\I}$.  According to Theorem
\ref{thm:redlti}, this number gives us an upper bound for $r(\R)$.  We
introduce now a more specialised class of ideals for which we can guarantee
that $\I$ and $\lt{\I}$ have the same reduction number.  We denote here for a
monomial ideal ${\mathcal{L}}$ by $\deg_{x_{k}}{{\mathcal{L}}}$ the maximal
$x_{k}$-degree of a minimal generator of ${\mathcal{L}}$.

\begin{definition}\label{weakid}
  Let $0\leq\ell<n$ be an integer.  The homogeneous ideal $\I\unlhd\P$ is
  \emph{weakly $\ell$-minimal stable}, if its leading ideal $\lt{\I}$ is
  weakly $\ell$-stable and if for any linear change of coordinates
  $A\in\mathrm{GL}(n,\kk)$ such that $\lt{(A\cdot\I)}$ is still weakly
  $\ell$-stable, we have $\deg_{x_{n-\ell}}{\lt{\I}}\le
  \deg_{x_{n-\ell}}{\lt{(A\cdot\I)}}$.
\end{definition}

Again it is easy to see that this is a generic notion, as any coordinate
transformation $A$ with $\lt{(A\cdot\I)}=\gin{\I}$ leads to a weakly
$\ell$-minimal stable position.

\begin{example}\label{ex:green}
  Consider for $n=3$ the ideal $\I=\langle x_1x_3, x_1x_2+x_2^2, x_1^2\rangle$
  introduced by Green \cite{mlg:gin}.  One finds that the leading ideal
  $\lt{\I}=\langle x_1^2,x_1x_2,x_1x_3,x_2^3,x_2^2x_3\rangle$ is even strongly
  stable and thus of course weakly $D$-stable (with $D=1$ here).  However,
  $\I$ is not weakly $D$-minimal stable, as $\gin(\I)=\langle x_1^2, x_1x_2,
  x_2^2, x_1x_3^2 \rangle$ and thus has a lower degree in $x_{2}$.
\end{example}

\begin{example}\label{ex:bh15}
  We consider for $n=4$ the ideal
  \begin{displaymath}
    \I=\langle x_1x_4-x_2x_3,\ x_2^3-x_1x_3^2,\ x_2^2x_4-x_1^3\rangle\,;
  \end{displaymath}
  it represents the special case $a=2$, $b=3$ of \cite[Example 15]{bh:rednum}.
  Here $D=2$ and the ideal $\I$ is not weakly $D$-stable.  The following
  linear change of coordinates $\Psi:x_{2}\mapsto x_{1}+x_{2}, x_{3}\mapsto
  x_{1}+x_{3}$ transforms $\I$ into a weakly $D$-stable (in fact, even
  strongly stable) ideal $\I_{1}$ with leading ideal
  \begin{displaymath}
    \lt{\I_{1}}=\langle x_1^2,\ x_1x_2^2,\ x_2^3,\ x_1x_2x_3^2,\ x_1x_3^3,\ 
                        x_2^2x_3^3,\ x_2x_3^4\rangle\,.
  \end{displaymath}
  Note that although this leading ideal is different from
  \begin{displaymath}
    \gin{\I}=\langle x_1^2,\ x_1x_2^2,\ x_2^3,\ x_1x_2x_3^2,\ x_2^2x_3^2,\ 
                     x_1x_3^4,\ x_2x_3^4\rangle\,,
  \end{displaymath}
  both ideals have the same minimal generator $x_{2}^{3}$.  Thus $\I_{1}$ is
  weakly $D$-minimal stable and we see that in this example the set of
  transformations leading to weakly $D$-minimal position is strictly larger
  than the one leading to the generic initial ideal.
\end{example}

\begin{theorem}\label{thm:wdms}
  Let $\I\unlhd\P$ be a weakly $D$-minimal stable homogeneous ideal.  Then
  $\lt{\I}$ has a minimal generator $x_{n-D}^\alpha$ and
  $r(\R)=r(\P/\lt{\I})=\alpha-1$.
\end{theorem}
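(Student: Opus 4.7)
The strategy is to sandwich $r(\R)$ between two quantities that both equal $\alpha-1$, using the three earlier results: Theorem \ref{thm:rnwds} (which determines the reduction number of a weakly $D$-stable monomial ideal), Theorem \ref{thm:redlti} (the upper bound $r(\R)\leq r(\P/\lt{\I})$), and Theorem \ref{thm:rngin} (equality for the generic initial ideal under the degree reverse lexicographic order).

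First I would observe that since $\I$ is weakly $D$-minimal stable, $\lt{\I}$ is weakly $D$-stable, so Theorem \ref{thm:rnwds} applies: $\lt{\I}$ has a minimal generator $x_{n-D}^{\alpha}$ and $r(\P/\lt{\I})=\alpha-1$. A small but crucial auxiliary observation is that $\deg_{x_{n-D}}{\lt{\I}}=\alpha$. Indeed, any minimal generator $t\neq x_{n-D}^{\alpha}$ must satisfy $\deg_{x_{n-D}}(t)<\alpha$, for otherwise $x_{n-D}^{\alpha}$ would divide $t$, contradicting the minimality of $t$ in the generating set. Theorem \ref{thm:redlti} then yields the upper bound $r(\R)\leq\alpha-1$.

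For the matching lower bound I would invoke Galligo's Theorem \ref{thm:gal} to pick a coordinate change $A\in\mathrm{GL}(n,\kk)$ with $\lt{(A\cdot\I)}=\gin{\I}$. Since $\gin{\I}$ is Borel-fixed, it is in particular strongly $D$-stable, hence weakly $D$-stable; so Theorem \ref{thm:rnwds} applies again, producing a minimal generator $x_{n-D}^{\alpha'}$ of $\gin{\I}$ with $r(\P/\gin{\I})=\alpha'-1$, and by the same minimality argument as above, $\deg_{x_{n-D}}{\gin{\I}}=\alpha'$. Theorem \ref{thm:rngin} then gives $r(\R)=r(\P/\gin{\I})=\alpha'-1$.

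The final step is precisely where the weakly $D$-minimal stability hypothesis is used: since $\lt{(A\cdot\I)}=\gin{\I}$ is weakly $D$-stable, Definition \ref{weakid} forces
\[
   \alpha \;=\; \deg_{x_{n-D}}{\lt{\I}} \;\leq\; \deg_{x_{n-D}}{\gin{\I}} \;=\; \alpha'\,.
\]
Combining the three estimates yields $\alpha'-1=r(\R)\leq\alpha-1\leq\alpha'-1$, so equality holds throughout and $r(\R)=r(\P/\lt{\I})=\alpha-1$. No step requires real work beyond the three cited theorems; the only genuine conceptual ingredient is recognising that the $x_{n-D}$-degree of a weakly $D$-stable monomial ideal is controlled by its unique pure-power generator in $x_{n-D}$, which is exactly the quantity that Definition \ref{weakid} optimises over.
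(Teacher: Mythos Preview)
Your proof is correct and follows essentially the same route as the paper's: both arguments apply Theorem~\ref{thm:rnwds} to $\lt{\I}$ and to $\gin{\I}$, invoke Theorem~\ref{thm:rngin}, and use the minimality condition in Definition~\ref{weakid} to compare the two pure-power exponents. The only cosmetic difference is that the paper asserts directly that $x_{n-D}^{\alpha}$ is a minimal generator of $\gin{\I}$ (relying on the remark preceding Example~\ref{ex:green} that the gin position is itself weakly $D$-minimal stable), whereas you make the sandwich explicit by additionally citing Theorem~\ref{thm:redlti} for the bound $r(\R)\le\alpha-1$; your version is arguably more self-contained.
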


\begin{proof}
  Since $\lt{\I}$ is weakly $D$-stable, it possesses by Proposition
  \ref{prop:wdsnp} a minimal generator $x_{n-D}^{\alpha}$ and thus
  $r(\P/\lt{\I})=\alpha-1$ by Proposition \ref{thm:rnwds}.  As $\I$ is assumed
  to be weakly $D$-minimal stable, $x_{n-D}^{\alpha}$ must also be a minimal
  generator of $\gin{\I}$ and hence $r(\R)=r(\P/\gin{\I})=\alpha-1$ by Theorem
  \ref{thm:rngin}.
\end{proof}

Unfortunately, Theorem \ref{thm:wdms} is mainly of theoretical interest, as we
are not able to provide a simple deterministic algorithm for the construction
of a change of coordinates leading to be weakly $D$-minimal stable position.
We present now Algorithm \ref{alg:rednum} for the computation of $r(\R)$.
Instead of a coordinate transformation, it is based on a parametric
computation.  The main point will be to keep the number of parameters as small
as possible.

\begin{algorithm}
  \caption{\textsc{RedNum}: (Absolute) Reduction Number}\label{alg:rednum}
  \begin{algorithmic}[1]
    \REQUIRE Gr\"obner basis $G$ of a homogeneous ideal $\I\lhd\P$
    \ENSURE the absolute reduction number $r(\R)$
    \STATE $D:=\dim{\I}$
    \STATE $\tilde G:=G$ with $x_{n-D+i}$ replaced by
        $-\sum_{j=1}^{n-D}a_{ij}x_{j}$ for all $i>0$
    \STATE $\tilde\I:=\langle\tilde G\rangle_{\tilde\P}$
    \STATE $\H:=\textsc{PommaretBasis}\,(\tilde \I)$
    \STATE \textbf{return} $\deg{\H}-1$
  \end{algorithmic}
\end{algorithm}

The algorithm simply adds $D$ linear forms $y_{i}$ of the special form
(\ref{eq:mintrafo}).  The occuring coefficients $a_{ij}$ are then considered
as undetermined parameters.  Replacing in the ideal $\I$ every variable
$x_{n-D+i}$ with $i>0$ by $-\sum_{j=1}^{n-D}a_{ij}x_{j}$, we obtain a new
homogeneous ideal $\tilde\I$ in the polynomial ring
$\tilde\P=\kk(a_{ij})[x_{1},\dots,x_{n-D}]$ over the field of rational
functions in the $D(n-D)$ parameters $a_{ij}$ and compute its Pommaret basis
(see \cite{wms:comb1,wms:comb2} and references therein).

\begin{theorem}
  Algorithm \ref{alg:rednum} correctly determines $r(\R)$.
\end{theorem}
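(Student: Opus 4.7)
The plan is to interpret the algorithm as the computation of a \emph{generic} minimal reduction of $\R$, and then to extract the reduction number from the Pommaret basis of the resulting zero-dimensional quotient.

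First, I would identify the ring $\tilde{\P}/\tilde{\I}$ constructed in Lines 2--3 with a quotient of $\P$ by a parametric minimal reduction. Let $y_{i}=x_{n-D+i}+\sum_{j=1}^{n-D}a_{ij}x_{j}$ for $i=1,\dots,D$ and let $\P_{\kk(a)}=\kk(a_{ij})[x_{1},\dots,x_{n}]$. Setting $\J=\I\cdot\P_{\kk(a)}+\langle y_{1},\dots,y_{D}\rangle$, the substitution $x_{n-D+i}\mapsto -\sum_{j}a_{ij}x_{j}$ amounts precisely to reduction modulo the forms $y_{i}$, yielding a graded $\kk(a_{ij})$-algebra isomorphism $\P_{\kk(a)}/\J\cong\tilde{\P}/\tilde{\I}$. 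Since the $y_{i}$ are generic linear forms, they induce a Noether normalisation of $\R\otimes_{\kk}\kk(a_{ij})$, so $\J$ is indeed a minimal reduction in the sense of Section \ref{sec:rednumgin}.

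Second, I would argue that the reduction number $r_{\J}(\R)$ computed symbolically over $\kk(a_{ij})$ coincides with the absolute reduction number $r(\R)$. The Hilbert function of $\tilde{\P}/\tilde{\I}$ equals the generic value of the Hilbert function of the specialised quotients $\P/\J_{a}$ as the parameter vector $a$ varies over $\kk^{D(n-D)}$; because Hilbert functions are upper-semicontinuous in such a parametric family and $r(\R)$ is by definition the infimum of $r_{\J'}(\R)$ over all minimal reductions $\J'$, the generic fibre attains this infimum. Hence $r_{\J}(\R)=r(\R)$, exploiting that $\kk$ is infinite so that the Zariski open set of ``good'' specialisations is non-empty.

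Third, $\tilde{\I}$ is zero-dimensional in $\tilde{\P}$, so $\tilde{\P}/\tilde{\I}$ is a finite-dimensional graded $\kk(a_{ij})$-vector space. By genericity of the $a_{ij}$, the leading ideal $\lt{\tilde{\I}}$ is quasi-stable for the degree reverse lexicographic order, hence $\tilde{\I}$ admits the Pommaret basis $\H$ computed in Line 4. By the standard correspondence between Pommaret bases and Castelnuovo--Mumford regularity \cite{wms:comb1,wms:comb2}, $\deg{\H}=\reg{\tilde{\I}}$, and for a zero-dimensional ideal this regularity is exactly one more than the largest degree $s$ with $(\tilde{\P}/\tilde{\I})_{s}\neq 0$. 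Since $s=r_{\J}(\R)$ by the very definition of the reduction number, we conclude $\deg{\H}-1=r(\R)$. I expect the main technical obstacle to be the semicontinuity argument in the second step, which ensures that the value computed symbolically over $\kk(a_{ij})$ really agrees with the minimum attained on the $\kk$-parameter space; the rest is essentially a translation between the substitution performed in Line 2 and the geometric language of minimal reductions.
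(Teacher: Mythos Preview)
Your argument follows the same overall strategy as the paper's: interpret the substitution in Line~2 as passing to a generic minimal reduction, argue that genericity yields $r(\R)$, and extract this value from the Pommaret basis via $\deg\H=\reg\tilde\I$ for the zero-dimensional ideal $\tilde\I$.

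There is, however, one point where your reasoning is incomplete. In your second step you invoke upper-semicontinuity over the family parametrised by $\kk^{D(n-D)}$ and conclude that the generic fibre realises $r(\R)=\inf_{\J'}r_{\J'}(\R)$, the infimum being over \emph{all} minimal reductions. But the triangular forms $y_i=x_{n-D+i}+\sum_j a_{ij}x_j$ do not parametrise all minimal reductions---only those whose linear span meets $\langle x_1,\dots,x_{n-D}\rangle$ trivially. Semicontinuity over this restricted family yields only the minimum of $r_{\J_a}(\R)$ over triangular reductions, and you have not explained why this equals the overall minimum $r(\R)$. The paper addresses precisely this point: it first treats fully general forms $z_i=\sum_{j=1}^{n}b_{ij}x_j$ with $Dn$ parameters (so that every minimal reduction does occur as a specialisation), establishes the genericity claim there by an explicit matrix-rank argument, and then observes that on the Zariski-open locus where the last $D$ columns of $(b_{ij})$ are invertible one can Gaussian-eliminate to the triangular shape \eqref{eq:mintrafo} without changing the ideal generated. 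Since the intersection of two nonempty Zariski-open sets is nonempty, the reduced ansatz also achieves $r(\R)$ generically. Your proof is easily repaired by this observation, or equivalently by noting that the triangular forms cover a dense open cell of the Grassmannian of $D$-planes in $\P_1$, which must meet the open locus where $r_{\J}(\R)=r(\R)$.

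A minor side remark: quasi-stability of $\lt\tilde\I$ does not require any genericity. A zero-dimensional monomial ideal contains a pure power of every variable and is therefore automatically quasi-stable, so the Pommaret basis in Line~4 exists as soon as zero-dimensionality of $\tilde\I$ has been established.
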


\begin{proof}
  We consider first the addition of $D$ generic linear forms
  $z_{i}=\sum_{j=1}^{n}b_{ij}x_{j}$ to the ideal $\I$.  This leads to an ideal
  $\hat\I$ in the polynomial ring $\hat\P=\kk(b_{ij})[x_{1},\dots,x_{n}]$
  depending on $Dn$ parameters and $n$ variables.  It follows from the
  classical proof of the existence of a Noether normalisation (see e.\,g.\
  \cite[Thm.~3.4.1]{gp:sing2}) over an infinite field that $\hat\I$ is a
  zero-dimensional ideal (which thus possesses a finite Pommaret basis).

  We now claim that the absolute reduction number $r(\R)$ is one less than the
  Castelnuovo-Mumford regularity $\reg{\hat\I}$.  According to
  \cite[Cor.~9.5]{wms:comb2}, $\reg{\hat\I}$ is given by the degree of the
  Pommaret basis of $\hat\I$, so that this claim implies that $r(\R)$ can be
  read off the Pommaret basis of $\hat\I$.  The correctness of the claim
  follows from a simple genericity argument.

  We build recursively $\kk(b_{ij})$-linear generating systems of the vector
  spaces $\hat\I_{q}$ for all degrees $q=1,2,\dots$ by taking all elements of
  $\H$ of degree $q$ and adding all products of elements of the previous
  generating system multiplied with a variable $x_{j}$.  We collect the
  coefficients of the obtained generators in a matrix.  Entering generic
  values for the parameters $b_{ij}$ leads to the maximal possible rank of
  this matrix and thus to the lowest possible dimension of the complement of
  the degree $q$ component of the corresponding specialisation of $\hat\I$.
  The absolute reduction number is the largest value of $q$ for which we
  cannot achieve a zero-dimensional complement.  Hence a generic choice of the
  parameters leads to the correct value of the absolute reduction number
  $r(\R)$.  Since computing over $\kk(b_{ij})$ corresponds to the generic
  branch of the parametric computation and since for a zero-dimensional ideal
  $\reg{\hat\I}$ is the lowest degree $q$ where $\hat\I_{q}=\hat\P_{q}$, we
  conclude that our claim is correct.

  Now consider the $(D\times n)$-matrix $(b_{ij})$: if the determinant of the
  submatrix composed of the last $D$ column does not vanish, then by a
  Gaussian elimination we obtain a set of linear forms $y_{i}$ in the
  ``reduced'' triangular form (\ref{eq:mintrafo}) leading to the same ideal
  $\hat\I$.  As the intersection of two Zariski open sets is again Zariski
  open, this observation proves that generically also the reduced ansatz
  (\ref{eq:mintrafo}) used in our algorithm yields the correct absolute
  reduction number.  Because of the special form of this ansatz, we may solve
  the linear forms for the variables $x_{n-D+i}$ and then perform the
  computations in the polynomial ring $\tilde\P$ depending only on $D(n-D)$
  parameters and $n-D$ variables. 
\end{proof}
 
\begin{remark}
  Since the Algorithms \ref{algo:wdstrafo} and \ref{alg:rednum} are based on
  Gr\"obner or Pommaret bases and the worst case complexity of computing
  Gr\"obner bases is doubly exponential in the number of variables (as shown
  by Mayr and Meyer \cite{mm:word}), we conclude that the complexity of these
  algorithms is also doubly exponential in the number of variables.
\end{remark}

\begin{example}\label{ex:weis}
  For $n=4$, the homogenised Weispfenning94 ideal
  $\I\lhd\kk[x_{1},\dots,x_{4}]$ is generated by the polynomials
$$  \begin{array}{ll}
    f_{1}&=x_{2}^4+x_{1}x_{2}^2x_{3}+x_{1}^2x_{4}^2-2x_{1}x_{2}x_{4}^2+
          x_{2}^2x_{4}^2+x_{3}^2x_{4}^2\,,\\
    f_{2}&=x_{1}x_{2}^4+x_{2}x_{3}^4-2x_{1}^2x_{2}x_{4}^2-3x_{4}^5\,,\\
    f_{3}&=-x_{1}^{3}x_{2}^2+x_{1}x_{2}x_{3}^3+x_{2}^4x_{4}+
          x_{1}x_{2}^2x_{3}x_{4}-2x_{1}x_{2}x_{4}^3\,.
  \end{array}$$
  Here $D=2$ and we replace $x_{4}$ by $-(a_{4,1}x_{1}+a_{4,2}x_{2})$ and
  $x_{3}$ by $-(a_{3,1}x_{1}+a_{3,2}x_{2})$ in $\I$ to obtain the new ideal
  $\tilde\I \lhd \kk(a_{3,1},a_{3,2},a_{4,1},a_{4,2}) [x_{1},x_{2}]$.  We
  compute a Pommaret basis $\H$ of $\tilde\I$ and get as leading terms
  \begin{displaymath}
    \lt{\H}=\bigl\{ x_1^4, x_1^3x_2^2, x_1^2x_2^3, x_1x_2^5, x_2^6 \bigr\}\,.
  \end{displaymath}
  Therefore $r(\R)=6-1=5$.
\end{example}
 
Our second example proves that there cannot exist a ``simple'' algorithm for
computing the (absolute) reduction number.  By ``simple'' we mean that the
algorithm uses exclusively information obtained from the leading terms (like
for instance Algorithm \ref{algo:wdstrafo} to transform into weakly $D$-stable
position). 

\begin{example} 
  We consider again Example \ref{ex:green} of Green. It follows immediately
  from the above presented bases that here $r(\R)=1<2=r(\P/\lt{\I})$.
  Following Algorithm \ref{alg:rednum}, we replace $x_3$ by $-(a_1x_1+a_2x_2)$
  in order to obtain the ideal $\tilde\I$.  Then we compute a Pommaret basis
  $\H$ of $\tilde\I$ and get for the leading terms
  \begin{displaymath}
    \lt{\H}=\bigl\{x_1^2, x_1x_2, x_2^2\bigr\}\,.
  \end{displaymath}
  Hence our algorithm yields the correct result $r(\R)=1$.  Since
  ${\mathcal{L}}=\lt{\I}$ is in fact even strongly stable, we conclude that
  $\gin{{\mathcal{L}}}={\mathcal{L}}$.  Hence the leading terms of the
  generators of $\I$ cannot contain any information on how to transform $\I$
  into a position such that the transformed ideal and its leading ideal share
  the same reduction number.
\end{example}

\section{Big Reduction Numbers and Gr\"obner Systems}
\label{sec:GS}

We present now an approach that is able to determine the whole reduction
number set $\mathrm{rSet}(\R)$ and thus in particular both the absolute and
the big reduction number.  Our method is based on the theory of Gr\"obner
systems, a notion introduced by Weispfenning \cite{vw:compgb} who also
provided a first algorithm for computing such systems.  Subsequently,
improvements and alternatives were presented by many authors
\cite{ksw:compgs,ksw:pps,am:gbp,mw:gbp,ss:cgb}.  Our calculations were done
using a \textsc{Maple} implementation of the \textsc{DisPGB} algorithm of
Montes which is available at \url{http://amirhashemi.iut.ac.ir/softwares}.

In the sequel, we denote by $\tilde\P=\P[\av]=\kk[\av,\xv]$ a
\emph{parametric} polynomial ring where $\av=a_1,\ldots,a_m$ represents the
parameters and $\xv=x_1,\ldots,x_n$ the variables.  Let $\prec_{\xv}$
(resp. $\prec_{\av}$) be a term order for the power products of the variables
$x_i$ (resp.\ the parameters $a_i$).  Then we introduce the block elimination
term order $\prec_{\xv,\av}$ in the usual manner: for all
$\alpha,\gamma\in\NN_{0}^{n}$ and all $\beta,\delta\in\NN_{0}^{m}$, we define
$\av^\delta\xv^\gamma\prec_{\xv,\av}\av^\beta\xv^\alpha$, if either
$\xv^\gamma \prec_{\xv} \xv^\alpha$ or $\xv^\gamma =\xv^\alpha$ and
$\av^\delta\prec_{\av} \av^\beta$.

\begin{definition}\label{def:GS}
  A finite set of triples $\bigl\{(\tilde G_i,N_i,W_i)\bigr\}_{i=1}^{\ell}$
  with finite sets $\tilde G_{i}\subset\tilde\P$ and
  $N_{i},W_{i}\subset\Q=\kk[\av]$ is a \emph{Gr\"obner system} for a
  parametric ideal $\tilde\I\unlhd\tilde\P$ with respect to the block order
  $\prec_{\xv,\av}$, if for every index $1\leq i\leq\ell$ and every
  specialisation homomorphism $\sigma:\Q\rightarrow\kk$ such that
  \begin{equation}\label{eq:spec}
    \text{\upshape (i) } \forall g\in N_i: \sigma(g)=0\,,\qquad
    \text{\upshape (ii) } \forall h\in W_i: \sigma(h)\neq0
  \end{equation}
  $\sigma(\tilde G_i)$ is a Gr\"obner basis of $\sigma(\tilde\I)\unlhd\P$ with
  respect to the order $\prec_{\xv}$ and if for any point $a\in\kk^{m}$ an
  index $1\leq i\leq\ell$ exists such that $a\in\V(N_{i})\setminus\V(W_{i})$.
\end{definition}

Thus a Gr\"obner systems yields a Gr\"obner basis for all possible values of
the parameters $\av$.  Weispfenning \cite[Theorem 2.7]{vw:compgb} proved that
every parametric ideal $\I\unlhd\S$ possesses a Gr\"obner system, but in
general the system is not unique.  Basically every algorithm (in particular
the \textsc{DisPGB} algorithm used by us) produces Gr\"obner systems such that
given one specific triple $(\tilde G_i,N_i,W_i)$ all specialisations $\sigma$
satisfying (\ref{eq:spec}) yield the same leading terms $\lt{\sigma(G_{i})}$
so that we can speak of a monomial ideal ${\mathcal{L}}_{i}\unlhd\P$
determined by the conditions $(N_i,W_i)$.  In the sequel, we will always
assume that a Gr\"obner system with this property is used.  As a simple
corollary, we find then that the reduction number set of an ideal $\I\unlhd\P$
is always finite.  Our proof also yields an explicit method for computing it.

\begin{theorem}\label{thm:redset}
  Let $\I\unlhd\P$ be a homogeneous ideal. Then its reduction number set
  $\mathrm{rSet}(\R)$ is finite.
\end{theorem}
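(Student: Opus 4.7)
The plan is to encode the entire family of minimal reductions of $\I$ as specialisations of a single parametric ideal and then apply the finiteness of Gr\"obner systems. Concretely, I would introduce $Dn$ new parameters $a_{ij}$ with $1\le i\le D$ and $1\le j\le n$, form the generic linear forms $z_i=\sum_{j=1}^{n}a_{ij}x_j$, and consider the parametric ideal $\tilde\I=\I\tilde\P+\langle z_1,\dots,z_D\rangle$ in $\tilde\P=\kk[\av,\xv]$. Every minimal reduction $\J$ of $\I$ then arises as $\sigma(\tilde\I)$ for a specialisation $\sigma:\Q\rightarrow\kk$ of the parameters at a point of $\kk^{Dn}$ at which the $z_i$'s induce a Noether normalisation of $\R$; by the classical genericity of Noether normalisation over the infinite field $\kk$, such points form a nonempty Zariski open subset of $\kk^{Dn}$.

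Next I would compute a Gr\"obner system $\{(\tilde G_i,N_i,W_i)\}_{i=1}^{\ell}$ for $\tilde\I$ with respect to a block elimination order $\prec_{\xv,\av}$ as in Definition \ref{def:GS}, chosen (as discussed just before the theorem) so that each stratum fixes a single monomial leading ideal $\mathcal{L}_i\unlhd\P$. Given any specialisation $\sigma$ that produces a minimal reduction, there is a unique index $i$ with $\sigma\in\V(N_i)\setminus\V(W_i)$; the very definition of a Gr\"obner system then guarantees that $\sigma(\tilde G_i)$ is a Gr\"obner basis of $\J=\sigma(\tilde\I)$ with leading ideal $\mathcal{L}_i$.

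The crucial observation is that $r_\J(\R)$ depends on $\J$ only through $\mathcal{L}_i$. Since $\J$ is a minimal reduction, $\P/\J$ is Artinian and $r_\J(\R)$ is the largest degree $q$ with $(\P/\J)_q\neq 0$; because $\P/\J$ and $\P/\mathcal{L}_i$ share the same Hilbert function (a standard consequence of $\sigma(\tilde G_i)$ being a Gr\"obner basis of $\J$), this top nonvanishing degree coincides with $\max\{q\mid(\P/\mathcal{L}_i)_q\neq 0\}$, a quantity manifestly intrinsic to $\mathcal{L}_i$. As a Gr\"obner system consists of only finitely many triples, at most $\ell$ distinct monomial ideals $\mathcal{L}_i$ occur, so $\mathrm{rSet}(\R)$ is a subset of a finite set and hence finite.

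I do not anticipate a deep obstacle. The point requiring a little care is that a general parameter specialisation need not produce a minimal reduction at all, so one must ensure that $\dim\sigma(\tilde\I)=0$ before speaking of $r_\J(\R)$; however, the Noether open set is dense in $\kk^{Dn}$ and therefore meets every stratum that can actually contribute to $\mathrm{rSet}(\R)$, while the remaining strata simply fail to yield minimal reductions and are discarded. A mild second technicality is to choose the Gr\"obner system so that leading coefficients, viewed as polynomials in $\Q=\kk[\av]$, do not vanish on each stratum — but this is exactly what the conditions $(N_i,W_i)$ enforce, so no extra work is needed.
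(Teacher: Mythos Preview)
Your proposal is correct and follows essentially the same route as the paper: parametrise all candidate reductions by $Dn$ coefficients, form the parametric ideal $\tilde\I=\I+\langle z_1,\dots,z_D\rangle$, take a Gr\"obner system so that each branch has a fixed leading ideal $\mathcal{L}_i$, and conclude finiteness because $r_\J(\R)$ is the top nonvanishing degree of $\P/\mathcal{L}_i$. The only cosmetic difference is that the paper phrases the dichotomy by separating the zero-dimensional branches from the positive-dimensional ones up front, whereas you invoke the Hilbert-function argument and discard the non-minimal strata afterwards; these are the same argument.
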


\begin{proof}
  By definition, any minimal reduction of $\I$ is induced by $D$ linear forms
  \begin{equation}\label{eq:y}
    y_{i}=\sum_{j=1}^{n}a_{i,j}x_{j}\,,\qquad i=1,\dots,D
  \end{equation}
  with $a_{i,j}\in\kk$ and minimality is equivalent to $\J=\I+\langle
  y_{1},\dots,y_{D}\rangle$ being a zero-dimensional ideal.  Considering the
  coefficients $a_{i,j}$ as parameters, we may identify $\J$ with a parametric
  ideal $\tilde\I\unlhd\tilde\P$.  Let $\bigl\{(\tilde
  G_i,N_i,W_i)\bigr\}_{i=1}^{\ell}$ be a Gr\"obner system for $\tilde\I$.
  Without loss of generality, we may assume that for the first $s$ triples the
  associated monomial ideals ${\mathcal{L}}_{i}$ are zero-dimensional, whereas
  all other triples lead to monomial ideals of positive dimension.  Hence
  precisely the parameter values satisfying one of the conditions
  $(N_{i},W_{i})$ with $1\leq i\leq s$ define minimal reductions.  If $d_{i}$
  is the highest degree such that $({\mathcal{L}}_{i})_{d_{i}}\neq\P_{d_{i}}$,
  then it follows that $\mathrm{rSet}(\R)=\{d_{1},\dots,d_{s}\}$. 
\end{proof}

\begin{remark}\label{rem:gen}
  Any Gr\"obner system for a parametric ideal $\tilde\I$ contains one generic
  branch where the set $N_{i}$ of equations is empty.  Obviously, the
  corresponding leading ideal ${\mathcal{L}}_{i}$ must be the generic initial
  ideal $\gin{\I}$ and we have $d_{i}=r(\R)$.  This observation immediately
  yields an alternative proof of \cite[Cor.~2.2]{nvt:conrednum}: for almost
  all minimal reductions $\J$ we find $r_{\J}(\R)=r(\R)$.
\end{remark}

\begin{example}
  Let us consider again Green's Example \ref{ex:green} where $D=1$.  Hence we
  set $\tilde\I=\langle x_1^2, x_1x_3, x_2^2+x_1x_2,
  a_1x_1+a_2x_2+a_3x_3\rangle$.  The Gr\"obner system for $\tilde\I$ consists
  of $4$ triples.  For simplicity, we present in the following list for each
  branch as first entry only the corresponding leading ideal
  ${\mathcal{L}}_{i}$; the other two entries are the equations $N_{i}$ and the
  inequations $W_{i}$, respectively.
  \begin{displaymath}
    \begin{array}{llll}
      \{x_1,\ x_2^2,\ x_3^2,\ x_2x_3\}\qquad  &\{\} & \{a_1,\ a_2,\ a_1-a_2\}\\
      \{x_1,\ x_2^2,\ x_3^2,\ x_2x_3\} & \{a_1-a_2\}\qquad & \{a_2\}\\
      \{x_1,\ x_2^2,\ x_3^2\} & \{a_2\} & \{a_1\}\\
      \{x_2,\ x_1^2,\ x_3^2,\ x_1x_3\} & \{a_1\} & \{\}\\
    \end{array}
  \end{displaymath}
  We observe that all four branches lead to zero-dimensional leading ideals
  and their reduction numbers are $1,1,2,1$, respectively.  Therefore,
  $\mathrm{rSet}(\R)=\{1,2\}$ and $br(\R)=2$.
\end{example}

\begin{remark}
  For comparison, we briefly outline Trung's constructive characterisation
  \cite{nvt:conrednum} of the big reduction number of an ideal.  He also takes
  $D$ linear forms (\ref{eq:y}) with undetermined coefficients $a_{i,j}$ and
  proceeds with the ideal $\J=\I+\langle y_{1},\dots,y_{D}\rangle\unlhd\P$
  (note that he does not work in the parametric polynomial ring $\tilde\P$).
  Then he introduces the matrix $M_d$ of the coefficients of the generators in
  a $\kk$-linear basis of $\J_{d}$ (which are elements in $\Q$).  Let $\V_d$
  be the variety of the ideal generated in $\Q$ by all the minors of $M_d$ of
  the size of the number of terms of degree $d$.  Then, $br(\R)$ is the
  largest $d$ such that $\V_d\ne\V_{d+1}$ \cite[Cor.~2.3]{nvt:conrednum}.

  Note, however, that a priori it is unclear how to detect that one has
  obtained the largest $d$ with this property.  Thus his approach becomes
  truely algorithmic only by combining it with another result of his, namely
  that $br(\R)+1$ is bounded by the Castelnuovo-Mumford regularity
  $\mathrm{reg}(\I)$ \cite[Prop.~3.2]{nvt:redexp}.  Now one can check all
  degrees $d$ until $\mathrm{reg}(\I)$---which has to be computed first---and
  then finally decide on the value of $br(\R)$.  While the computation of a
  Gr\"obner system is surely a rather expensive operation, we strongly believe
  that it is much more efficient that the determination and subsequent
  analysis of large determinantal ideals.  Furthermore, our approach yields
  directly all possible values for the reduction number, whereas Trung must
  consider one determinantal ideal after the other (of increasing size).
\end{remark}


Finally, we note that Trung \cite{nvt:conrednum} proved that $br(\R)\le
br(\P/\lt{\I})$ if $\R$ is Cohen-Macaulay.  He also claimed that generally one
cannot compare $br(\R)$ and $br(\P/\lt{\I})$.  However, he did not provide a
concrete example where the above inequality is violated---which we will do
now.

\begin{example}
  Consider for $n=3$ the ideal
  \begin{displaymath}
    \I=\langle x_{1}^2x_{2}+x_{1}x_{2}^2,\
    x_{2}^3+x_{2}^2x_{3},\ x_{1}x_{3}^5, x_{2}^2x_{3}^5,\
    x_{1}^2x_{3}+x_{1}x_{2}x_{3},\ x_{1}^3-x_{1}x_{2}^2 \rangle\,.
  \end{displaymath}
  The given generators form already a Gr\"obner basis and thus $D=1$.
  $\lt{\I}$ is quasi-stable and, using Pommaret bases, one
  easily shows that the depth of $\R$ is $0$ and $\R$ is not Cohen-Macaulay.
  With $\J=\I+\langle x_{1}+x_{2}+x_{3} \rangle$, a simple computation yields
  that $\lt{\J}=\langle x_{1}, x_{2}x_{3}^2, x_{2}^2x_{3}, x_{2}^3, x_{3}^7
  \rangle$ and thus $br(\R)\ge r_{\J}(\R)=6$.  For showing that $br(\R)=6$, we
  set $\tilde\I=\I+\langle a_{1}x_{1}+a_{2}x_{2}+a_{3}x_{3}
  \rangle\unlhd\tilde\P$.  The Gr\"obner system of this ideal shows that the
  reduction numbers of the zero-dimensional branches are $3,5,6$,
  respectively, and therefore $br(\R)=6$.  On the other hand, $\lt{\I}=\langle
  x_{1}^2x_{3}, x_{2}^3, x_{1}^2x_{2}, x_{1}^3, x_{1}x_{3}^5, x_{2}^2x_{3}^5
  \rangle$.  We set $\tilde\I_{1}=\lt{\I}+\langle
  a_{1}x_{1}+a_{2}x_{s}+a_{3}x_{3} \rangle$, and compute its Gr\"obner system.
  Only three branches are zero-dimensional and they all have as reduction
  number $3$.  This shows that $br(\P/\lt{\I})=3<br(\R)$.
\end{example}

\section*{Acknowledgements.}  

The first author greatly appreciates financial support by DAAD (German
Academic Exchange Service) for a stay at Universit\"at Kassel in summer 2013.
He also would like to thank his coauthors for the invitation, hospitality, and
support.  The research of the first author was in part supported by a grant
from IPM (No. 92550420).

\bibliographystyle{splncs03}

\begin{thebibliography}{10}

\bibitem{wms:effgen}
Albert, M., Hashemi, A., Pytlik, P., Schweinfurter, M., Seiler, W.: Effective
  genericity for polynomial ideals. in preparation (2014)

\bibitem{bs:reverse}
Bayer, D., Stillman, M.: A theorem on refining division orders by the reverse
  lexicographic orders. Duke J.\ Math.  55,  321--328 (1987)

\bibitem{bh:rednum}
Bresinsky, H., Hoa, L.: On the reduction number of some graded algebras. Proc.\
  Amer.\ Math.\ Soc.  127,  1257--1263 (1999)

\bibitem{ac:red}
Conca, A.: Reduction numbers and initial ideals. Proc.\ Amer.\ Math.\ Soc.
  131,  1015--1020 (2002)

\bibitem{gall:weier}
Galligo, A.: A propos du th\'eor\`eme de pr\'eparation de {W}eierstrass. In:
  Norguet, F. (ed.) Fonctions de Plusieurs Variables Complexes, pp. 543--579.
  Lecture Notes in Mathematics 409, Springer-Verlag, Berlin (1974)

\bibitem{ag:divstab}
Galligo, A.: Th\'eor\`eme de division et stabilit\'e en g\'eometrie analytique
  locale. Ann.\ Inst.\ Fourier  29(2),  107--184 (1979)

\bibitem{mlg:gin}
Green, M.: Generic initial ideals. In: Elias, J., Giral, J., Mir{\'o}-Roig, R.,
  Zarzuela, S. (eds.) Six Lectures on Commutative Algebra, pp. 119--186.
  Progress in Mathematics~166, Birkh\"auser, Basel (1998)

\bibitem{gp:sing2}
Greuel, G.M., Pfister, G.: A \textsc{Singular} Introduction to Commutative
  Algebra. Springer-Verlag, Berlin, $2^{\text{nd}}$ edn. (2008)

\bibitem{wms:quasigen}
Hashemi, A., Schweinfurter, M., Seiler, W.: Quasi-stability versus genericity.
  In: Gerdt, V., Koepf, W., Mayr, E., Vorozhtsov, E. (eds.) Computer Algebra in
  Scientific Computing --- {CASC} 2012, pp. 172--184. Lecture Notes in Computer
  Science~7442, Springer-Verlag, Berlin (2012)

\bibitem{ksw:compgs}
Kapur, D., Sun, Y., Wang, D.: A new algorithm for computing comprehensive
  {G}r\"obner systems. In: Koepf, W. (ed.) Proc.\ ISSAC~2010, pp. 29--36. ACM
  Press (2010)

\bibitem{ksw:pps}
Kapur, D., Sun, Y., Wang, D.: An efficient algorithm for computing a
  comprehensive {G}r\"obner system of a parametric polynomial system. J.\
  Symb.\ Comput.  49,  27--44 (2013)

\bibitem{mm:word}
Mayr, E., Meyer, A.: The complexity of the word problems for commutative
  semigroups and polynomial ideals. Adv.\ Math.  46,  305--329 (1982)

\bibitem{am:gbp}
Montes, A.: A new algorithm for discussing {G}r\"obner bases with parameters.
  J.\ Symb.\ Comput.  33,  183--208 (2002)

\bibitem{mw:gbp}
Montes, A., Wibmer, M.: Gr\"obner bases for polynomial systems with parameters.
  J.\ Symb.\ Comput.  45,  1391--1425 (2010)

\bibitem{nr:red}
Northcott, D., Rees, D.: Reduction of ideals in local rings. Cambridge Philos.\
  Soc.  50,  145--158 (1954)

\bibitem{ss:cgb}
Sato, Y., Suzuki, A.: A simple algorithm to compute comprehensive {G}r\"obner
  bases using {G}r\"obner bases. In: Trager, B. (ed.) xx, pp. 326--331. ACM
  Press (2006)

\bibitem{wms:comb1}
Seiler, W.: A combinatorial approach to involution and $\delta$-regularity {I}:
  Involutive bases in polynomial algebras of solvable type. Appl.\ Alg.\ Eng.\
  Comm.\ Comp.  20,  207--259 (2009)

\bibitem{wms:comb2}
Seiler, W.: A combinatorial approach to involution and $\delta$-regularity
  {II}: Structure analysis of polynomial modules with {P}ommaret bases. Appl.\
  Alg.\ Eng.\ Comm.\ Comp.  20,  261--338 (2009)

\bibitem{nvt:redexp}
Trung, N.: Reduction exponent and degree bounds for the defining equations of a
  graded ring. Proc.\ Amer.\ Math.\ Soc.  101,  229--236 (1987)

\bibitem{nvt:reduct}
Trung, N.: Gr\"obner bases, local cohomology and reduction number. Proc.\
  Amer.\ Math.\ Soc.  129,  9--18 (2001)

\bibitem{nvt:conrednum}
Trung, N.: Constructive characterization of the reduction numbers. Compos.\
  Math.  137,  99--113 (2003)

\bibitem{wvv:rednumalg}
Vasconcelos, W.: The reduction number of an algebra. Compos.\ Math.  106,
  189--197 (1996)

\bibitem{wvv:rednumid}
Vasconcelos, W.: Reduction numbers of ideals. J.\ Alg.  216,  652--664 (1999)

\bibitem{vw:compgb}
Weispfenning, V.: Comprehensive {G}r\"obner bases. J.\ Symb.\ Comp.  14,  1--29
  (1992)

\end{thebibliography}

\end{document}